\setlist[enumerate,1]{label=(\arabic*),ref=\arabic*$^\circ$}
\title{On abelian extensions of finite abelian subgroups of Cremona groups}
\date{}
\author{Luka Filin}
\date{} % Activate to display a given date or no date
\newcounter{cthm}
\newtheorem{proposition}[equation]{Proposition}
\newtheorem{thm}[equation]{Theorem}
\newtheorem{corollary}[equation]{Corollary}
\theoremstyle{definition}
\newtheorem{definition}[equation]{Definition}
\newtheorem{remark}[equation]{Remark}
\newtheorem{question}[equation]{Question}
\newtheorem{conjecture}[equation]{Conjecture}
\newtheorem{example}[equation]{Example}
\theoremstyle{example}
\newcommand{\Addresses}{{% additional braces for segregating \footnotesize
  \bigskip
  \footnotesize

  \
    \

\textsc{HSE University, Moscow, Russia}
 \\
  \textit{E-mail: lafilin@edu.hse.ru} \texttt{}
}}
\begin{document}
\maketitle
\begin{abstract}
In this note, we study extension properties of finite abelian subgroups of $\mathrm{Bir}(X)$ where~$X$ is a rational (or rationally connected) variety of dimension at most $4$. We are guided by the following question: is it true that if a finite group $G$ faithfully acts on a rationally connected variety of dimension $n$, then $G$ can faithfully act on a terminal Fano variety of dimension $n$? 
%whether Mori fiber spaces with non-trivial base can admit ``more'' symmetries than Fano varieties. 
Using algebraic methods, we prove that up to dimension~$4$, abelian extensions of finite abelian subgroups of the Cremona group coincide with direct products of such subgroups, with one exception. This result implies a positive answer to the above question up to dimension~$4$ in the case of finite abelian groups, modulo a conjectural description of finite abelian subgroups of $\mathrm{Bir}(X)$ where $X$ is a rationally connected threefold.
\end{abstract}

\section{Introduction} 

The Cremona group $\mathrm{Cr}_n(\mathbb{C}) = \mathrm{Bir}(\mathbb{P}^n)$ is the group of birational automorphisms of the $n$-dimensional projective space. The number $n$ is called the rank of the Cremona group. In this paper, we are interested in finite subgroups of $\mathrm{Cr}_n(\mathbb{C})$. For $n = 1$, one has $\mathrm{Cr}_1(\mathbb{C}) = \mathrm{PGL}(2, \mathbb{C})$, so this case is elementary. 

The classification of finite subgroups of $\mathrm{Cr}_2(\mathbb{C})$ was obtained in \cite{DI09}. As for finite subgroups of $\mathrm{Cr}_3(\mathbb{C})$, the complete classification seems to be out of reach. There exist results concerning some classes of finite groups, see \cite{Pr09} in the case of simple groups. 
The study of finite abelian subgroups of $\mathrm{Cr}_3(\mathbb{C})$ was initiated in the works \cite{Lo24}, \cite{LPZ25}. We recall the relevant results below. 

In higher dimensions, one can study the group of birational automorphisms $\mathrm{Bir}(X)$ of a rationally connected variety $X$, so that the study of $\mathrm{Cr}_n(\mathbb{C})$ becomes a particular case of a more general problem. We recall some results in this direction.  
The bound for the rank of finite abelian $p$-subgroups of $\mathrm{Bir}(X)$ where $X$ is a rationally connected variety was obtained in \cite{KZh24}. 
In dimension $3$, this bound was previously obtained in a series of works \cite{Pr11}, \cite{Pr14}, \cite{PS17}, \cite{Kuz20},
\cite{Xu20}, \cite{Lo22}. 

For $n\geq 4$, to the best of our knowledge there are no classification results on finite subgroups of $\mathrm{Cr}_n(\mathbb{C})$. The reason is that in higher dimensions the geometry becomes more complicated, and the explicit methods of the minimal model program are not developed yet.  

In this paper, we approach the study of finite abelian subgroups of $\mathrm{Bir}(X)$ where~$X$ is a $4$-dimensional rationally connected variety by purely algebraic methods. Our work is guided by the following question (for the relevant definitions see Section \ref{subsec-mfs}).

\begin{question}[{\cite[Question 1.12]{Lo24})}]
\label{the-question}
Is it true that if a finite group $G$ faithfully acts on a Mori fiber space $f\colon X\to Z$ such that $X$ is rationally connected with $\dim X=n$ and $\dim Z>0$, then $G$ admits a faithful action on a Fano variety with terminal singularities of dimension $n$?  
\end{question}

Roughly speaking, in this question it is asked whether Mori fiber spaces with non-trivial base can have ``more'' symmetries than Fano varieties of the same dimension. 
By virtue of the minimal model program, any finite group which faithfully acts on a rationally connected variety of dimension $n$ (and in particular, any finite subgroup of $\mathrm{Cr}_n(\mathbb{C})$) also faithfully acts on a Mori fiber space $f\colon X\to Z$ with $\dim X=n$. Keeping this in mind, one can reformulate Question \ref{the-question} as follows: is it true that if a finite group $G$ faithfully acts on a rationally connected variety of dimension $n$, then $G$ can faithfully act on a terminal Fano variety of dimension $n$?

The answer to Question \ref{the-question} is not known even in the case $n=2$, despite the existence of the classification of finite subgroups of $\mathrm{Cr}_2(\mathbb{C})$, cf. the discussion in \cite[Section 5.7]{DI09}. 
By \cite[Proposition 1.11]{Lo24} the answer to Question \ref{the-question} is positive for finite abelian groups in the case $n=3$, see Proposition \ref{intro-proposition-ext-1-2} and Proposition~\ref{prop-mfs-pt}. In this paper, we consider this question for $n=4$ in the case of finite abelian groups. %From now on, all   

To formulate our results, we introduce some notation. 

\begin{definition}
Let $\mathcal{A}_n$ be the set of all (isomorphism classes of) finite abelian subgroups of $\mathrm{Cr}_n(\mathbb{C})$. 
Let $\mathcal{B}_n$ be the set of all (isomorphism classes of) finite abelian groups that can faithfully act on rationally connected varieties of dimension $n$. 
%$P_i = \{\, G \subseteq Cr_i(\mathbb{C}) \ |\ |G| < \infty, Z(G) = G \, \}.$
\end{definition}

Clearly, $\mathcal{A}_n\subseteq \mathcal{B}_n$ for any $n\geq 1$, and $\mathcal{A}_n=\mathcal{B}_n$ for $n=1,2$.

\begin{proposition} %Let $G$ be a finite abelian subgroup of $\mathrm{PGL}(2, \mathbb{C})$. Then $G$ is isomorphic to one of the following groups:  
\label{prop-a1}
The set $\mathcal{A}_1$ consists of the following groups:
\begin{enumerate}
\item 
$\mathbb{Z}/k$\ for \ $k\geq1$, 
\item
$(\mathbb{Z}/2)^2$.  
\end{enumerate} 
 
\end{proposition}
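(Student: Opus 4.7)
The plan is to exploit the identification $\mathrm{Cr}_1(\mathbb{C})=\mathrm{PGL}_2(\mathbb{C})$ recalled in the introduction, so that classifying $\mathcal{A}_1$ amounts to classifying finite abelian subgroups of $\mathrm{PGL}_2(\mathbb{C})$. The containment of the listed groups in $\mathcal{A}_1$ is immediate: for $\mathbb{Z}/k$ one takes the diagonal embedding $[x\!:\!y]\mapsto[\zeta_k x\!:\!y]$ with $\zeta_k$ a primitive $k$-th root of unity, and for $(\mathbb{Z}/2)^2$ one takes the Klein four group generated by $[x\!:\!y]\mapsto[-x\!:\!y]$ and $[x\!:\!y]\mapsto[y\!:\!x]$, which commute in $\mathrm{PGL}_2$.

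For the other direction, let $G\subseteq\mathrm{PGL}_2(\mathbb{C})$ be finite abelian. Each $g\in G$ has finite order, so any lift to $\mathrm{GL}_2(\mathbb{C})$ is semisimple; hence $g$ is diagonalizable and, if non-trivial, has exactly two fixed points on $\mathbb{P}^1$. The first step is to take $g\in G$ of maximal order and consider its centralizer $C=C_{\mathrm{PGL}_2(\mathbb{C})}(g)$. If $g$ has order $\geq 3$, the centralizer is exactly the maximal torus $T\cong\mathbb{C}^*$ fixing the two fixed points of $g$, since the Weyl element inverts the torus character. Thus $G\subseteq T$, and any finite subgroup of $\mathbb{C}^*$ is cyclic, giving $G\cong\mathbb{Z}/k$.

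The remaining case is when every element of $G$ has order at most $2$, so $G$ is elementary abelian $2$-group. The main step here is to bound the rank: take two involutions $g,h\in G$ with distinct fixed-point pairs $\{a_1,a_2\}$ and $\{b_1,b_2\}$ on $\mathbb{P}^1$ (if no such pair exists, $G$ lies in a common torus and is cyclic, so already $G\cong\mathbb{Z}/2$). Commuting with $g$ forces $h$ to preserve $\{a_1,a_2\}$; if $h$ fixed both points, then $h$ would lie in the torus at $\{a_1,a_2\}$ together with $g$, and being the unique order-$2$ element there it would equal $g$, contradiction. So $h$ swaps $a_1,a_2$. Now any third involution $k\in G$ commuting with $g$ either fixes both points of $g$ (giving $k=g$) or swaps them, in which case $hk$ fixes both points of $g$ and lies in $\{1,g\}$, forcing $k\in\{h,gh\}$. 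Hence $G=\{1,g,h,gh\}\cong(\mathbb{Z}/2)^2$, completing the classification.

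The argument is essentially elementary; the only subtle point is the centralizer computation that forces $G$ into a torus once an element of order $\geq 3$ is present. Alternatively, one can simply quote the classical classification of finite subgroups of $\mathrm{PGL}_2(\mathbb{C})$ (cyclic, dihedral, $A_4$, $S_4$, $A_5$) and observe that the abelian members are exactly the cyclic groups together with the dihedral group of order $4$.
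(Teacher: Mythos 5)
Your proposal is correct. Note, however, that the paper itself offers no proof of this proposition: it is stated as classical, with the introduction merely remarking that $\mathrm{Cr}_1(\mathbb{C})=\mathrm{PGL}(2,\mathbb{C})$ makes the case elementary. Your argument fills this in properly. The two key steps both check out: the centralizer of an element of order at least $3$ in $\mathrm{PGL}_2(\mathbb{C})$ is indeed the maximal torus (a swap of the two fixed points would conjugate $g$ to $g^{-1}$, which is only consistent with commutativity when $g^2=1$), which forces $G$ cyclic; and in the elementary abelian $2$-case the uniqueness of the involution in $\mathbb{C}^*\cong T\subset\mathrm{PGL}_2$ caps the group at $(\mathbb{Z}/2)^2$. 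The commutativity of $\mathrm{diag}(-1,1)$ and the swap matrix holds in $\mathrm{PGL}_2$ (their commutator is $-I$), as you implicitly use. Your closing alternative --- reading off the abelian members (cyclic groups and the Klein four-group) from the classical cyclic/dihedral/$A_4$/$S_4$/$A_5$ classification --- is presumably the one-line justification the author had in mind.
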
 
 
\begin{thm} [{\cite{Bl07}}]  %Let $G$ be a finite abelian subgroup of $\mathrm{Cr}_2(\mathbb{C})$.  Then $G$ is isomorphic to one of the following groups: 
\label{prop-a2}
The set $\mathcal{A}_2$ consists of the following groups:
\begin{enumerate}
\item
\label{cremona-plane-abelian-1}
$\mathbb{Z}/k\times \mathbb{Z}/m$\ for\ $k\geq
1, m\geq 1$,
 
\item
\label{cremona-plane-abelian-2}
$\mathbb{Z}/2k\times (\mathbb{Z}/2)^2$\ for\ $k\geq 1$,
 
\item
\label{cremona-plane-abelian-3}
$(\mathbb{Z}/4)^2\times \mathbb{Z}/2$,
 
\item
\label{cremona-plane-abelian-4}
$(\mathbb{Z}/3)^3$,
 
\item
\label{cremona-plane-abelian-5}
$(\mathbb{Z}/2)^4$.
\end{enumerate}
\end{thm}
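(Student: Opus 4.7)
The plan is to use the $G$-equivariant minimal model program on surfaces to reduce the problem to a finite list of model surfaces, and then to classify abelian subgroups of the automorphism group of each model. Given a finite abelian $G\subset\mathrm{Cr}_2(\C)$, a standard regularization argument (equivariant resolution of indeterminacy of a birational map $\PP^2\bir\PP^2$, followed by equivariant blow-downs) realizes $G$ as a subgroup of $\mathrm{Aut}(X)$ for some smooth projective rational surface $X$. Running the $G$-equivariant MMP on $X$, one may further assume that $X$ is a $G$-Mori fiber space: either $X$ is a del Pezzo surface with $\mathrm{Pic}(X)^G\cong\Z$, or there is a $G$-equivariant conic bundle $\pi\colon X\to\PP^1$ with $\mathrm{rk}\,\mathrm{Pic}(X)^G=2$.

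For the del Pezzo case I would go through each degree $d\in\{1,\ldots,9\}$ separately, using the known structure of the automorphism group in each degree. When $d=9$, the maximal finite abelian subgroups of $\mathrm{PGL}(3,\C)$ are diagonal tori, contributing $\Z/k\times\Z/m$, together with the Heisenberg-type subgroup $(\Z/3)^2$. For $d=8$ on $\PP^1\times\PP^1$ one combines diagonal cyclic subgroups of the two factors with Klein-four subgroups of each copy of $\mathrm{PGL}(2,\C)$, producing $\Z/k\times\Z/m$ and $(\Z/2)^4$. For $d=6,7$ the automorphism group is still toric and produces nothing new. For $d\leq 5$ the automorphism groups are finite and explicitly tabulated (see e.g.\ Dolgachev--Iskovskikh); in particular the Fermat cubic realizes $(\Z/3)^3$ as the image of the coordinate $(\Z/3)^4$-action on $X_0^3+X_1^3+X_2^3+X_3^3=0$ modulo scalars, while del Pezzo surfaces of degrees $2$ and $1$, together with their Geiser and Bertini involutions, account for the remaining $2$-elementary classes.

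For the conic bundle case the action fits in a short exact sequence
\[
1\to K\to G\to H\to 1,
\]
where $H\subset\mathrm{PGL}(2,\C)$ is the image on the base and $K$ acts fiberwise. Since $G$ is abelian, $H$ is cyclic or Klein four, and $K$ embeds into $\mathrm{PGL}(2,\C)$ via its action on a general fiber, so $K$ is likewise cyclic or Klein four. The requirement that $G$ preserve and act faithfully on the singular fibers of $\pi$, combined with commutativity, cuts the possible extensions down to the families (1), (2), and (3); in particular, the sporadic $(\Z/4)^2\times\Z/2$ is realized by a specific conic bundle model.

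The main obstacle I anticipate is twofold. First, the conic bundle analysis is genuinely delicate: one must control the possibly nontrivial abelian extensions $1\to K\to G\to H\to 1$ (commutativity constrains but does not split them), and one must avoid double-counting groups whose conic bundle model is equivariantly birational to a del Pezzo model via a Sarkisov link. Second, and conversely, one has to \emph{exhibit} each group on the list as a genuine subgroup of $\mathrm{Cr}_2(\C)$; for the sporadic cases $(\Z/4)^2\times\Z/2$, $(\Z/3)^3$, and $(\Z/2)^4$ this requires writing down an explicit model and verifying the action, which is the most concrete and most easily overlooked step in the argument.
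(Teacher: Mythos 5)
The paper does not actually prove this statement: it is imported verbatim from Blanc \cite{Bl07}, so there is no internal argument to compare yours against. Your outline is indeed the strategy of Blanc's proof (and of Dolgachev--Iskovskikh): regularize, run the $G$-equivariant MMP, and split into the del Pezzo and conic bundle cases. As a plan it is sound, but as written it is only a plan -- the decisive content (the exhaustive analysis of abelian subgroups of $\mathrm{Aut}(X)$ for del Pezzo surfaces of each degree, the full conic bundle analysis, and the verification that nothing outside the five families occurs) is deferred, and you correctly identify these as the hard parts.

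There is, however, one concrete error in the part you do commit to. In the conic bundle case, $K$ embeds into the automorphisms of the generic fiber, a conic over $\C(t)$, and $H$ into $\mathrm{PGL}(2,\C)$; finite abelian subgroups of either are cyclic or $(\Z/2)^2$, i.e., both $H$ and $K$ lie in $\mathcal{A}_1$. Hence every group arising from a conic bundle lies in $\mathcal{A}_1\bullet\mathcal{A}_1$, which by Proposition \ref{intro-proposition-ext-1-2} equals $\mathcal{A}_1\times\mathcal{A}_1$, i.e., families (\ref{cremona-plane-abelian-1}), (\ref{cremona-plane-abelian-2}) and (\ref{cremona-plane-abelian-5}) of the theorem -- not (\ref{cremona-plane-abelian-1}), (\ref{cremona-plane-abelian-2}), (\ref{cremona-plane-abelian-3}) as you assert. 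In particular $(\Z/4)^2\times\Z/2$, of type $[2,2,1]$ at $p=2$, is \emph{not} an extension of a cyclic-or-Klein-four group by a cyclic-or-Klein-four group (the only order-$32$ groups of that form are $\Z/16\times\Z/2$ and $\Z/8\times(\Z/2)^2$, plus rank-$2$ groups), so it cannot be realized on any conic bundle and must be exhibited on a del Pezzo surface. Conversely, $(\Z/2)^4$ does occur in the conic bundle case as an extension of $(\Z/2)^2$ by $(\Z/2)^2$. This misassignment would derail the case analysis if carried out as described.
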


Let $H, G, K$ be finite abelian groups. 
We say that $G$ is an extension of $K$ by $H$, if there exists an exact sequence
\begin{equation}
\label{intro-exact-sequence} 
0 \rightarrow H \rightarrow G \rightarrow K \rightarrow 0. 
\end{equation}
In this case, we will use the notation $G=H \bullet K$. 
We define two operations on the sets of groups. 
 
\begin{definition}
\label{defin-operations}
Consider two nonempty sets of finite abelian groups $\mathcal{A}$ and $\mathcal{B}$. We define 
\[
\mathcal{A} \times \mathcal{B} = \{\, H \times K \ | \ H \in \mathcal{A},\ K \in \mathcal{B} \,\}, 
\]
 and 
 \[
 \mathcal{A} \bullet \mathcal{B} = \{\, H \bullet K \ | \ H \in \mathcal{A},\ K \in \mathcal{B} \, \}.
 \]
We emphasize that here by $H \bullet K$ we mean all abelian extensions of $K$ by $H$.
%In the latter formula, we mean that we take all abelian extensions $H \bullet K$ of $K$ by $H$.  
Also, put $\mathcal{A} + \mathcal{B} = \mathcal{A} \cup \mathcal{B}$.
\end{definition}

Clearly, $A\times B\subseteq A\bullet B$. 
Also, see Proposition \ref{prop-properties} for the elementary properties of these operations, which include associativity and commutativity. 
The starting point of our research is the following observation.

\begin{proposition}[{\cite[Proposition 3.12]{Lo24}}]
\label{intro-proposition-ext-1-2}
Let $\mathcal{A}_1$ be as in Proposition \ref{prop-a1}, and $\mathcal{A}_2$ be as in Theorem \ref{prop-a2}. Then 
\[
\mathcal{A}_1 \times \mathcal{A}_1=\mathcal{A}_1 \bullet \mathcal{A}_1, \quad \quad \quad \quad \mathcal{A}_1 \times \mathcal{A}_2=\mathcal{A}_1 \bullet \mathcal{A}_2.
\] 
\end{proposition}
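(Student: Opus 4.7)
The containment $\mathcal{A}_i\times\mathcal{A}_j\subseteq\mathcal{A}_i\bullet\mathcal{A}_j$ is immediate from Definition~\ref{defin-operations}, so the plan is to establish the reverse containment by a direct case analysis grounded in the structure theorem for finite abelian groups.

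The first step is a reduction to $p$-Sylow subgroups: for any abelian extension $0\to H\to G\to K\to 0$ and any prime $p$, passage to $p$-parts yields an exact sequence $0\to H_p\to G_p\to K_p\to 0$, and the isomorphism class of $G$ is determined by the collection $\{G_p\}_p$. For the groups in $\mathcal{A}_1$ and $\mathcal{A}_2$, the $p$-Sylow subgroups are cyclic except at $p=2$ (always) and at $p=3$ when $K$ is of type $(\mathbb{Z}/3)^3$. At every other prime, $G_p$ is $2$-generated and hence automatically a product of two cyclic $p$-groups, compatible with the $p$-part of either $\mathcal{A}_1\times\mathcal{A}_1$ or $\mathcal{A}_1\times\mathcal{A}_2$. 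The real work thus concentrates at $p=2$ (and at $p=3$ in one family of cases).

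For $\mathcal{A}_1\bullet\mathcal{A}_1$, I would split into four cases depending on whether each of $H,K$ is cyclic or equal to $(\mathbb{Z}/2)^2$. If both are cyclic, then $G$ is $2$-generated and thus $G\cong\mathbb{Z}/a\times\mathbb{Z}/b$. In a mixed case such as $H=\mathbb{Z}/k$ and $K=(\mathbb{Z}/2)^2$, the $2$-Sylow $G_2$ is an abelian $2$-group of order $4\cdot 2^{v_2(k)}$ containing a cyclic subgroup of order $2^{v_2(k)}$ with quotient $(\mathbb{Z}/2)^2$; a short invariant-factor argument forces $G\in\{\mathbb{Z}/2\times\mathbb{Z}/2k,\;(\mathbb{Z}/2)^2\times\mathbb{Z}/k\}$. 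When $H=K=(\mathbb{Z}/2)^2$, the group $G$ has order $16$ and exponent dividing $4$, leaving only $(\mathbb{Z}/2)^4$, $(\mathbb{Z}/2)^2\times\mathbb{Z}/4$, and $(\mathbb{Z}/4)^2$. Each outcome manifestly lies in $\mathcal{A}_1\times\mathcal{A}_1$.

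For $\mathcal{A}_1\bullet\mathcal{A}_2$ I would iterate the same method over the ten pairs $(H,K)$ with $H\in\mathcal{A}_1$ and $K$ of one of the five shapes listed in Theorem~\ref{prop-a2}. In each pair, I enumerate the admissible 2-Sylows (and the 3-Sylow when $K$ is of type $(\mathbb{Z}/3)^3$) via the inclusion $pG\subseteq H$ forced by the exponent-$p$ quotient, and then exhibit an explicit decomposition $G\cong H'\times K'$ with $H'\in\mathcal{A}_1$ and $K'\in\mathcal{A}_2$. The main obstacle will be purely combinatorial bookkeeping in the hardest subcases, namely $H=(\mathbb{Z}/2)^2$ paired with $K$ of type (3) or (5): there the $2$-Sylow of $G$ can take several shapes of order up to~$64$, and for each shape one must peel off a factor in $\mathcal{A}_1$ so that the residual summand matches one of the five shapes (1)--(5). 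No new ideas beyond the structure theorem are required, but the case count is substantial.
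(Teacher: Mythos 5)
The paper does not prove this proposition at all: it is imported verbatim from \cite[Proposition 3.12]{Lo24}. So there is no in-paper proof to compare against; the closest benchmark is the paper's own proof of Theorem \ref{intro-main-thm}, which handles the analogous equality $\mathcal{A}_2\bullet\mathcal{A}_2$ versus $\mathcal{A}_2\times\mathcal{A}_2$ by exactly the strategy you propose: reduce to $p$-parts (Remark \ref{rem-gp}), enumerate the possible $p$-Sylow subgroups of an extension via the Littlewood--Richardson criterion of Proposition \ref{prop-fulton} (your invariant-factor arguments are an equivalent elementary substitute), and match each outcome against the target list. Your structural claims check out where you state them explicitly: both cyclic gives $\mathfrak{r}(G)\le 2$; $\mathbb{Z}/k\bullet(\mathbb{Z}/2)^2$ gives only $\mathbb{Z}/2k\times\mathbb{Z}/2$ and $\mathbb{Z}/k\times(\mathbb{Z}/2)^2$; the order-$16$ case gives the three exponent-$\le 4$ groups, all in $\mathcal{A}_1\times\mathcal{A}_1$.

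The caveat is that for the second equality your text is a plan rather than a proof: the ten cases, including the two you correctly identify as hardest, are left unexecuted, and the entire content of the proposition lives in those computations. For instance, $(\mathbb{Z}/2)^2\bullet\bigl((\mathbb{Z}/4)^2\times\mathbb{Z}/2\bigr)$ produces, among others, $(\mathbb{Z}/4)^2\times(\mathbb{Z}/2)^3$, which has rank $5$ and lies in $\mathcal{A}_1\times\mathcal{A}_2$ only because of the specific factorization $(\mathbb{Z}/2)^2\times\bigl((\mathbb{Z}/4)^2\times\mathbb{Z}/2\bigr)$; nothing generic (rank bounds, exponent bounds) delivers this, so each such group must actually be exhibited. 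Two smaller imprecisions: the inclusion $pG\subseteq H$ is available only when $K$ is elementary abelian, so for $K$ of type $(\mathbb{Z}/4)^2\times\mathbb{Z}/2$ you must work with $4G\subseteq H$ (or just use the Young-diagram product); and after treating each prime separately you still need the assembly step (as in the paper's refrain ``since $\mathfrak{r}(G_{\neq 2})\le 2$, $G=G_2\times G_{\neq 2}$ belongs to $\dots$'') to conclude global membership in $\mathcal{A}_1\times\mathcal{A}_2$, since the $\mathcal{A}_2$-factor can only be non-cyclic-times-cyclic at one prime. None of this is a wrong turn -- completing your outline does prove the proposition -- but as written the decisive verifications are asserted rather than done.
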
 

In fact, Proposition \ref{intro-proposition-ext-1-2} has a geometric meaning. As shown in Proposition \ref{prop-mfs-pt}, it provides the positive answer to Question \ref{the-question} for finite abelian groups in the case $n\leq 3$. 
The goal of our work is to generalize Proposition \ref{intro-proposition-ext-1-2}. 
%By Proposition \ref{intro-proposition-ext-1-2}, the answer to this question is negative for $n\leq 3$ in the case when $G$ is an abelian group. 

Abelian groups of product type were defined in  \cite{Lo24} as elements of $\mathcal{A}_1\times \mathcal{A}_2$. In particular, a group of product type can faithfully act on a product $\mathbb{P}^1\times S$ where $S$ is a rational surface. Thus, such groups form a rather well-understood class of subgroups in $\mathrm{Cr}_1(\mathbb{C})\times\mathrm{Cr}_2(\mathbb{C})\subseteq\mathrm{Cr}_3(\mathbb{C})$. At the moment, it is not clear whether all finite abelian subgroups of $\mathrm{Cr}_3(\mathbb{C})$ are of product type, see Conjecture \ref{intro-conjecture} below. As shown in \cite{LPZ25}, there exist finite abelian groups in $\mathcal{B}_3$ which are not of product type. 

We introduce the following definition.
\begin{definition} 
The sets of \emph{groups of $n$-th product type} are defined inductively: $\mathcal{PA}_1 =~\mathcal{A}_1$, and 
\[
\mathcal{PA}_n = \sum_{i = 1}^{n-1} \mathcal{A}_i \times \mathcal{A}_{n - i} \quad \quad \quad \text{for}\quad \quad \quad n\geq 2.
\]
Similarly, we define
$\mathcal{PB}_1 =~\mathcal{B}_1=\mathcal{A}_1$, and 
\[
\mathcal{PB}_n = \sum_{i = 1}^{n-1} \mathcal{B}_i \times \mathcal{B}_{n - i} \quad \quad \quad \text{for}\quad \quad \quad n\geq 2.
\]
\end{definition} 

In particular, for any $n\geq 1$ we have 
\[
\mathcal{PA}_n\subseteq \mathcal{A}_n, \quad \quad \quad \quad \mathcal{PB}_n\subseteq \mathcal{B}_n, \quad \quad \quad \quad \mathcal{PA}_n\subseteq \mathcal{PB}_n.
\] 
Also,
\[
\mathcal{PA}_2 = \mathcal{A}_1 \times \mathcal{A}_1 = \mathcal{PB}_2, \quad \quad \quad \quad \mathcal{PA}_3 = \mathcal{A}_1 \times \mathcal{A}_2 = \mathcal{PB}_3, 
\]
\[
\mathcal{PA}_4 = \mathcal{A}_1 \times \mathcal{A}_3 + \mathcal{A}_2 \times \mathcal{A}_2, \quad \quad \quad \quad \mathcal{PB}_4 = \mathcal{B}_1 \times \mathcal{B}_3 + \mathcal{B}_2 \times \mathcal{B}_2.
\] 
If a finite abelian group $G$ acts faithfully on the product $X=X_1\times X_2$ such that $X$ is rationally connected with $\dim X=n$ and $X_i\neq X$, then $G$ belongs to $\mathcal{PB}_n$. 

\begin{remark}
\label{rem-MFS-PT}
If a finite abelian group $G$ faithfully acts on a Mori fiber space $f\colon X\to Z$ such that $X$ is rationally connected with $\dim X=n$ and $\dim Z>0$, then there exists an exact sequence \eqref{intro-exact-sequence} where $K$ faithfully acts on $Z$, and $H$ faithfully acts on the schematic generic fiber of $f$ which is a Fano variety (and hence it is rationally connected) over the function field of $Z$. In particular, both $H$ and $K$ faithfully act on a rationally connected variety of dimension $n$, so that $H\in \mathcal{B}_{n-i}$ (cf. Remark \ref{remark-lefschetz}) and $K\in \mathcal{B}_i$ where $i=\dim Z$, hence $1\leq i\leq n-1$. It follows that $G=H\bullet K$.
\end{remark}

Thus, Question \ref{the-question} can be refined as follows. %: is it true that $G$ belongs to $\mathcal{PB}_n$? We formulate it for later use.  
%Using our notation, this question can be reformulated. 

\begin{question}
\label{second-question}
Is it true that $\mathcal{A}_i\bullet \mathcal{A}_{n-i}=\mathcal{A}_i\times \mathcal{A}_{n-i}$ for $n\geq 2$ and $1\leq i\leq n-1$? 
\end{question}

\begin{question}
\label{third-question}
Is it true that $\mathcal{B}_i\bullet \mathcal{B}_{n-i}= \mathcal{B}_i\times \mathcal{B}_{n-i}$ for $n\geq 2$ and $1\leq i\leq n-1$? 
\end{question}

According to Proposition \ref{prop-positive-negative}, if the answer to Question \ref{third-question} is positive, then the answer to Question \ref{the-question} is positive in the case of finite abelian groups. 
By Proposition \ref{intro-proposition-ext-1-2}, the answer to Question \ref{second-question} is positive for $n\leq 3$. Our first main result is as follows.

\begin{thm}
\label{intro-main-thm}
$(\mathcal{A}_2 \bullet \mathcal{A}_2)\setminus (\mathcal{A}_2 \times \mathcal{A}_2)=\{ (\mathbb{Z}/4)^5 \}$. 
\end{thm}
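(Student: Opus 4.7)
My plan is to establish the two inclusions $\{(\mathbb{Z}/4)^5\}\subseteq(\mathcal{A}_2\bullet\mathcal{A}_2)\setminus(\mathcal{A}_2\times\mathcal{A}_2)$ and $(\mathcal{A}_2\bullet\mathcal{A}_2)\setminus(\mathcal{A}_2\times\mathcal{A}_2)\subseteq\{(\mathbb{Z}/4)^5\}$ separately.

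For the first inclusion I introduce the invariant $s_2(M):=\dim_{\mathbb{F}_2}(2M/4M)$, which equals the number of factors of the primary $2$-part of $M$ of order at least~$4$. A direct inspection of the five families in Theorem~\ref{prop-a2} gives $s_2(M)\leq 2$ for every $M\in\mathcal{A}_2$, and since $s_2$ is additive on direct products, $s_2(X\times Y)\leq 4<5=s_2((\mathbb{Z}/4)^5)$, so $(\mathbb{Z}/4)^5\notin\mathcal{A}_2\times\mathcal{A}_2$. To exhibit $(\mathbb{Z}/4)^5$ as an abelian extension of two $\mathcal{A}_2$-groups, take $H=K=(\mathbb{Z}/4)^2\times\mathbb{Z}/2\in\mathcal{A}_2$ from item~(\ref{cremona-plane-abelian-3}) of Theorem~\ref{prop-a2}: with a basis $e_1,\ldots,e_5$ of $(\mathbb{Z}/4)^5$, the subgroup $\widetilde H:=\langle e_1,e_2,2e_3\rangle$ is isomorphic to $(\mathbb{Z}/4)^2\times\mathbb{Z}/2$, and the quotient is generated by $\bar e_3,\bar e_4,\bar e_5$ with $2\bar e_3=0$, hence again $\cong(\mathbb{Z}/4)^2\times\mathbb{Z}/2$.

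For the second inclusion, let $G=H\bullet K$ with $H,K\in\mathcal{A}_2$ and $G\neq(\mathbb{Z}/4)^5$. I would pass to the primary decomposition $G=\bigoplus_p G_p$, with each $G_p$ an abelian extension of $K_p$ by $H_p$, and argue prime by prime. The $p$-Sylow subgroups of $\mathcal{A}_2$-groups form a short list: four families for $p=2$ (rank-$\leq 2$ abelian $2$-groups; $\mathbb{Z}/2^a\times(\mathbb{Z}/2)^2$ with $a\geq 1$; $(\mathbb{Z}/4)^2\times\mathbb{Z}/2$; $(\mathbb{Z}/2)^4$), rank-$\leq 2$ abelian $3$-groups together with $(\mathbb{Z}/3)^3$ for $p=3$, and rank-$\leq 2$ abelian $p$-groups for $p\geq 5$. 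For each pair of types of $(H_p,K_p)$ I would list the possible invariant-factor shapes of $G_p$ using the rank bounds $s_1(G_p)\leq s_1(H_p)+s_1(K_p)$ and $s_2(G_p)\leq s_1(H_p)+s_2(K_p)$ (both coming from the short exact sequence) together with the exponent bound $\exp(G_p)\mid\exp(H_p)\exp(K_p)$, and show that each admissible shape splits as a direct product of two of the allowed $p$-Sylow shapes. A short routing argument then reassembles the local splittings $G_p=X_p\times Y_p$ into a global decomposition $G=X\times Y$ with $X,Y\in\mathcal{A}_2$: the rigid Sylow shapes $(\mathbb{Z}/4)^2\times\mathbb{Z}/2$, $(\mathbb{Z}/2)^4$, $(\mathbb{Z}/3)^3$ come from items~(3), (4), (5) of Theorem~\ref{prop-a2}, which forbid non-trivial companion primes on that Sylow's side, making the assembly automatic.

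The main obstacle is the diagonal $p=2$ case $(H_2,K_2)=((\mathbb{Z}/4)^2\times\mathbb{Z}/2,(\mathbb{Z}/4)^2\times\mathbb{Z}/2)$, where $|G_2|=2^{10}$ and the bounds leave many invariant-factor shapes to analyse. The key observation is that $s_2(G_2)=5$ saturates the inequality $s_2(G_2)\leq s_1(H_2)+s_2(K_2)=5$; combined with $|G_2|=2^{10}$, this forces all five invariant factors of $G_2$ to be $\mathbb{Z}/4$, yielding $G_2=(\mathbb{Z}/4)^5$. Every other shape from this pair splits explicitly as a product of two of the four allowed $2$-Sylow shapes. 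For every non-diagonal pair, a direct check using $|H_2|\cdot|K_2|=2^{10}$ together with the exponent constraint $\exp(H_2),\exp(K_2)\leq 4$ (required for $G_2=(\mathbb{Z}/4)^5$) shows that no such pair exists in the $\mathcal{A}_2$ $2$-Sylow list, so no further exception arises.
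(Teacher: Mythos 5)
Your first paragraph is sound, and in fact more explicit than the paper on both points it covers: the paper obtains $(\mathbb{Z}/4)^5$ as an extension only through the nonvanishing of the Littlewood--Richardson coefficient $c^{[2,2,2,2,2]}_{[2,2,1],[2,2,1]}$ and leaves $(\mathbb{Z}/4)^5\notin\mathcal{A}_2\times\mathcal{A}_2$ to inspection of its tables, whereas your $s_2$-count and the explicit subgroup $\langle e_1,e_2,2e_3\rangle$ settle both cleanly. The architecture of your second half (primary decomposition plus a case analysis over pairs of types from Theorem \ref{prop-a2}) is also the paper's, and your treatment of the pair $\bigl((\mathbb{Z}/4)^2\times\mathbb{Z}/2,(\mathbb{Z}/4)^2\times\mathbb{Z}/2\bigr)$ is correct.

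The gap is the step ``each admissible shape splits'' for the remaining pairs. Your necessary conditions --- $s_1(G_p)\le s_1(H_p)+s_1(K_p)$, $s_2(G_p)\le s_1(H_p)+s_2(K_p)$, the exponent bound and the order --- are strictly weaker than the actual existence criterion for extensions, and the surplus is fatal. Concretely, take $H=K=\mathbb{Z}/16\times(\mathbb{Z}/2)^2$ (item (\ref{cremona-plane-abelian-2}) of Theorem \ref{prop-a2} with $k=8$), so $H_2=K_2$ has type $[4,1,1]$ and $|H_2||K_2|=2^{12}$. The shape $\mu=[4,2,2,2,1,1]$, i.e.\ $G=G_2=\mathbb{Z}/16\times(\mathbb{Z}/4)^3\times(\mathbb{Z}/2)^2$, passes all your tests: $|G_2|=2^{12}$, $s_1=6\le 3+3$, $s_2=4\le 3+1$, $\exp=16\mid 16^2$. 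But this group does not split as a product of two admissible $2$-Sylow shapes: its four invariant factors of order $\ge 4$ would have to be distributed two--two, forcing each factor to be either of rank $2$ (which then cannot absorb a leftover $\mathbb{Z}/2$) or exactly $(\mathbb{Z}/4)^2\times\mathbb{Z}/2$ (which cannot contain the $\mathbb{Z}/16$); so your procedure would declare it a second exception and the theorem would fail. In reality it is not an extension of $K_2$ by $H_2$ at all --- $c^{[4,2,2,2,1,1]}_{[4,1,1],[4,1,1]}=0$, since the skew shape $\mu/[4,1,1]$ has only two nonempty columns and so cannot carry the four $1$'s of a lattice filling of content $[4,1,1]$ --- but none of your listed inequalities (nor their conjugate-partition counterparts, which are also saturated here) detect this. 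This is precisely the point where the paper must invoke Proposition \ref{prop-fulton} and compute the actual products $\lambda\cdot\nu$ of Young diagrams: the exact list of shapes occurring in each case is much shorter than the list your bounds allow, and it is only on that exact list that every non-exceptional shape splits. To close the gap you need the full Littlewood--Richardson criterion (or an equivalent exact enumeration) in place of the four inequalities.
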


In \cite[Example 11.5]{Lo24} it is shown that there exist $4$ groups which belong to $\mathcal{B}_3\setminus \mathcal{PA}_3$. 

\begin{enumerate}
\item
$G_1=(\mathbb{Z}/4)^4$, 
\item
$G_2=(\mathbb{Z}/8)^2\times(\mathbb{Z}/4) \times \mathbb{Z}/2$, 
\item
$G_3=(\mathbb{Z}/6)^2 \times (\mathbb{Z}/3)^2$,
\item
$G_4=(\mathbb{Z}/6)^3 \times \mathbb{Z}/2$. 
\end{enumerate}
 
Define a set 
$
\mathcal{B}'_3=\mathcal{A}_3 + \{G_1,G_2,G_3,G_4\}.
$
One has $\mathcal{B}'_3\subseteq \mathcal{B}_3$.  
Put $\mathcal{A}'_3=\mathcal{PA}_3=\mathcal{A}_1\times \mathcal{A}_2$. One has $\mathcal{A}'_3\subseteq \mathcal{A}_3$.  

\begin{conjecture}[{\cite{LPZ25}}]
\label{intro-conjecture}
$\mathcal{A}_3=\mathcal{A}'_3$, 
$\mathcal{B}_3=\mathcal{B}'_3$. 
\end{conjecture}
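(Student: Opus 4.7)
The plan is to follow the standard strategy of the $G$-equivariant minimal model program, reducing to Mori fiber spaces and then treating separately the cases of positive-dimensional and zero-dimensional base. Given a finite abelian group $G$ acting faithfully on a rationally connected (respectively, rational) threefold $X$, I would first $G$-equivariantly run the MMP to reduce to a $G$-Mori fiber space $f \colon X' \to Z$ with $X'$ terminal, acted on faithfully by $G$, and birational to $X$.

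If $\dim Z > 0$, then by Remark \ref{rem-MFS-PT} the group $G$ fits into an abelian extension $0 \to H \to G \to K \to 0$ in which $H$ acts faithfully on the rationally connected generic fiber and $K$ acts faithfully on $Z$, both of positive dimension strictly less than $3$. Since $\mathcal{B}_j = \mathcal{A}_j$ for $j \leq 2$, the positive answer to Question \ref{second-question} for $n \leq 3$ provided by Proposition \ref{intro-proposition-ext-1-2} (together with its variant with the roles of $\mathcal{A}_1$ and $\mathcal{A}_2$ reversed, proved by the same method) yields $G \in \mathcal{A}_1 \times \mathcal{A}_2 = \mathcal{PA}_3 = \mathcal{A}'_3 \subseteq \mathcal{A}_3$. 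This is satisfactory for both halves of the conjecture and uses only results already at our disposal.

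The substantial case is $\dim Z = 0$, where $X'$ is a terminal $G$-Fano threefold. For $\mathcal{B}_3 = \mathcal{B}'_3$ one would classify the finite abelian subgroups of $\mathrm{Aut}(X')$ as $X'$ ranges over all terminal Fano threefolds and verify that each such subgroup is either of product type or one of the four exceptional groups $G_1,\ldots,G_4$ from \cite{Lo24}. The natural approach is family-by-family, using the Mori--Mukai classification in the smooth case and existing partial classifications in the singular terminal case, analyzing the induced action on the Picard group, the anticanonical system and the fixed loci, and refining the $p$-rank bounds of \cite{KZh24,Pr11,Pr14,PS17,Kuz20,Xu20,Lo22}. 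For $\mathcal{A}_3 = \mathcal{A}'_3$, one must in addition show that none of $G_1,\ldots,G_4$ can act faithfully on any \emph{rational} terminal Fano threefold; this typically requires genuinely geometric rationality obstructions, such as intermediate Jacobians, Clemens--Griffiths type arguments, or equivariant birational invariants applied to the specific Fano targets on which these groups are known to act.

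The principal obstacle, by a wide margin, is precisely this Fano case: a complete classification of finite abelian group actions on terminal Fano threefolds (especially the singular ones) is not currently available, and separating $\mathcal{A}_3$ from $\mathcal{B}_3$ requires non-rationality input that goes well beyond the purely algebraic methods developed in the present paper. This is the reason the assertion is stated only as a conjecture, and any realistic attack would likely first consolidate the singular terminal Fano classification and the equivariant birational geometry of the four exceptional examples before the algebraic reduction above could be pushed to a full proof.
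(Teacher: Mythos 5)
The statement you are addressing is stated in the paper as a \emph{conjecture}, attributed to \cite{LPZ25}, and the paper gives no proof of it; there is therefore no argument of the paper to compare yours against. Your proposal is, by your own admission in its final paragraph, not a proof but a research plan, and the plan correctly locates where the difficulty lies. The reduction you describe for the case $\dim Z>0$ is sound and is essentially the content of Remark \ref{rem-MFS-PT} together with Proposition \ref{intro-proposition-ext-1-2} (note that no separate ``variant with the roles reversed'' is needed: the operation $\bullet$ is commutative by Proposition \ref{prop-properties}, so $\mathcal{A}_2\bullet\mathcal{A}_1=\mathcal{A}_1\bullet\mathcal{A}_2=\mathcal{A}_1\times\mathcal{A}_2$ covers both the case of a conic-bundle-type fibration and that of a del Pezzo fibration). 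But this only handles groups arising from Mori fiber spaces with positive-dimensional base.

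The genuine gap is exactly the one you name: the case $\dim Z=0$, i.e.\ finite abelian subgroups of $\mathrm{Aut}(X')$ for $X'$ a terminal (possibly singular, possibly non-rational) $G$-Fano threefold, and, for the $\mathcal{A}_3=\mathcal{A}'_3$ half, the non-rationality of every terminal Fano threefold admitting a faithful action of one of $G_1,\dots,G_4$. No classification covering this case is available, and nothing in the present paper supplies it --- the paper's contribution is precisely to show (Theorems \ref{intro-main-thm} and \ref{intro-second-thm}) that \emph{if} the conjectural lists $\mathcal{A}'_3$ and $\mathcal{B}'_3$ are correct, then the extension problem in dimension $4$ can be resolved by purely combinatorial means. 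So your proposal does not prove the conjecture, cannot be expected to with the tools at hand, and should not be presented as a proof; as an accurate diagnosis of the open problem it is fine.
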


Clearly, we have $\mathcal{A}_1 \times \mathcal{A}'_3 = \mathcal{A}_1\times \mathcal{A}_1\times \mathcal{A}_2\subseteq \mathcal{A}_2\times \mathcal{A}_2$, and similarly $\mathcal{A}_1 \bullet \mathcal{A}'_3\subseteq\mathcal{A}_2 \bullet \mathcal{A}_2$ (cf. Proposition \ref{prop-lemma}). 
Thus, Theorem \ref{intro-main-thm} implies the following.

\begin{corollary}
$(\mathcal{A}_1 \bullet \mathcal{A}'_3 + \mathcal{A}_2 \bullet \mathcal{A}_2) \setminus (\mathcal{A}_1 \times \mathcal{A}'_3 + \mathcal{A}_2 \times \mathcal{A}_2) = \{ (\mathbb{Z}/4)^5 \}.$
\end{corollary}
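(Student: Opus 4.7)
The plan is to notice that the corollary is a straightforward consequence of Theorem \ref{intro-main-thm} combined with the two inclusions that the author records immediately before the statement, namely
\[
\mathcal{A}_1 \times \mathcal{A}'_3 \subseteq \mathcal{A}_2 \times \mathcal{A}_2, \qquad \mathcal{A}_1 \bullet \mathcal{A}'_3 \subseteq \mathcal{A}_2 \bullet \mathcal{A}_2.
\]
So there is essentially no new work to do beyond invoking these facts; all the content lives in Theorem \ref{intro-main-thm}.

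Concretely, I would argue as follows. Since the operation $+$ is just set-theoretic union (Definition \ref{defin-operations}), the inclusion $X\subseteq Y$ always gives $X+Y=Y$. Applying this to both inclusions above,
\[
\mathcal{A}_1 \bullet \mathcal{A}'_3 + \mathcal{A}_2 \bullet \mathcal{A}_2 \;=\; \mathcal{A}_2 \bullet \mathcal{A}_2,
\qquad
\mathcal{A}_1 \times \mathcal{A}'_3 + \mathcal{A}_2 \times \mathcal{A}_2 \;=\; \mathcal{A}_2 \times \mathcal{A}_2.
\]
Therefore the set difference on the left-hand side of the corollary collapses to
\[
(\mathcal{A}_2 \bullet \mathcal{A}_2) \setminus (\mathcal{A}_2 \times \mathcal{A}_2),
\]
which equals $\{(\mathbb{Z}/4)^5\}$ by Theorem \ref{intro-main-thm}.

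For completeness I would briefly justify the two inclusions as the author suggests. The first is immediate from the identity $\mathcal{A}'_3 = \mathcal{A}_1\times \mathcal{A}_2$ together with associativity of $\times$ (Proposition \ref{prop-properties}) and the observation $\mathcal{A}_1\times \mathcal{A}_1 \subseteq \mathcal{A}_2$, which follows directly from the classification in Theorem \ref{prop-a2}\ref{cremona-plane-abelian-1}. The second is a reference to Proposition \ref{prop-lemma}: given an abelian extension $0\to H\to G\to K\to 0$ with $H\in \mathcal{A}_1$ and $K=K_1\times K_2 \in \mathcal{A}_1\times \mathcal{A}_2$, one rewrites it as an abelian extension $0\to H'\to G\to K'\to 0$ with $H'\in \mathcal{A}_2$ and $K'\in \mathcal{A}_2$ (for instance by grouping $H$ with $K_1$). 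The only subtlety worth checking is that this regrouping preserves membership in $\mathcal{A}_2$, which again boils down to Theorem \ref{prop-a2}\ref{cremona-plane-abelian-1}. No step is a genuine obstacle here — the entire difficulty is concentrated in Theorem \ref{intro-main-thm}.
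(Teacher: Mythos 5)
Your proof is correct and is essentially the paper's own argument: the paper deduces the corollary from Theorem \ref{intro-main-thm} together with the two inclusions of Proposition \ref{prop-lemma}, and proves the second inclusion by exactly the regrouping you sketch, namely $\mathcal{A}_1 \bullet (\mathcal{A}_1 \times \mathcal{A}_2) = \mathcal{A}_1 \bullet (\mathcal{A}_1 \bullet \mathcal{A}_2) = (\mathcal{A}_1 \bullet \mathcal{A}_1) \bullet \mathcal{A}_2 \subseteq \mathcal{A}_2 \bullet \mathcal{A}_2$, using Proposition \ref{intro-proposition-ext-1-2} and associativity of $\bullet$. The only nitpick is that the containment $\mathcal{A}_1 \times \mathcal{A}_1 \subseteq \mathcal{A}_2$ needs items (\ref{cremona-plane-abelian-2}) and (\ref{cremona-plane-abelian-5}) of Theorem \ref{prop-a2} as well as item (\ref{cremona-plane-abelian-1}), since $\mathcal{A}_1$ contains $(\mathbb{Z}/2)^2$.
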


Our second main results is as follows.

\begin{thm}
\label{intro-second-thm}
$\mathcal{B}_1 \bullet \mathcal{B}_3' + \mathcal{B}_2 \bullet \mathcal{B}_2 = \mathcal{B}_1 \times \mathcal{B}_3' + \mathcal{B}_2 \times \mathcal{B}_2$.
\end{thm}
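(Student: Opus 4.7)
The plan is to establish the non-trivial inclusion $\subseteq$; the reverse inclusion is immediate from $A \times B \subseteq A \bullet B$ applied to each summand. I split the left-hand side into its two summands $\mathcal{B}_2 \bullet \mathcal{B}_2$ and $\mathcal{B}_1 \bullet \mathcal{B}_3'$ and handle them separately.

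For $\mathcal{B}_2 \bullet \mathcal{B}_2$: Since $\mathcal{B}_n = \mathcal{A}_n$ for $n \leq 2$, Theorem~\ref{intro-main-thm} gives $\mathcal{B}_2 \bullet \mathcal{B}_2 = (\mathcal{B}_2 \times \mathcal{B}_2) \cup \{(\mathbb{Z}/4)^5\}$. The product part is already in the target, and the factorization $(\mathbb{Z}/4)^5 = \mathbb{Z}/4 \times G_1$, with $\mathbb{Z}/4 \in \mathcal{B}_1$ and $G_1 = (\mathbb{Z}/4)^4 \in \mathcal{B}_3'$ by definition, places the isolated group in $\mathcal{B}_1 \times \mathcal{B}_3'$.

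For $\mathcal{B}_1 \bullet \mathcal{B}_3'$: Decompose $\mathcal{B}_3' = \mathcal{A}_3 \cup \{G_1, G_2, G_3, G_4\}$. I would treat $\mathcal{B}_1 \bullet \mathcal{A}_3$ by mirroring the argument behind the Corollary to Theorem~\ref{intro-main-thm}: Proposition~\ref{prop-lemma} combined with Proposition~\ref{intro-proposition-ext-1-2} gives the rebalancing $\mathcal{A}_1 \bullet (\mathcal{A}_1 \times \mathcal{A}_2) \subseteq (\mathcal{A}_1 \bullet \mathcal{A}_1) \bullet \mathcal{A}_2 \subseteq (\mathcal{A}_1 \times \mathcal{A}_1) \bullet \mathcal{A}_2 \subseteq \mathcal{A}_2 \bullet \mathcal{A}_2$, using $\mathcal{A}_1 \times \mathcal{A}_1 \subseteq \mathcal{A}_2$. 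The same chain handles $\mathcal{A}_1 \bullet K$ as soon as each $K \in \mathcal{A}_3$ is placed in $\mathcal{A}_1 \times \mathcal{A}_2$ via a $K$-equivariant MMP on a rationally connected threefold carrying a faithful $K$-action (cf.\ Remark~\ref{rem-MFS-PT} and Proposition~\ref{prop-mfs-pt}), and the case reduces to the $\mathcal{B}_2 \bullet \mathcal{B}_2$ piece already handled. For each $G_i$ I would analyze the abelian extensions $H \bullet G_i$ with $H \in \mathcal{A}_1$ directly: the prime factors of $|G_i|$ lie in $\{2, 3\}$, so for cyclic $H = \mathbb{Z}/n$ the $(6,n)$-coprime part splits off freely (yielding $\mathcal{B}_1 \times \mathcal{B}_3'$), and the remaining $2$- and $3$-primary extensions are dissected by elementary-divisor bookkeeping to produce either a $\mathcal{B}_3'$-direct-factor matched with a $\mathcal{B}_1$-factor, or a direct-product decomposition into two groups in $\mathcal{A}_2 = \mathcal{B}_2$.

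\emph{Main obstacle.} The delicate step is the MMP reduction for $\mathcal{A}_3$: establishing unconditionally that every $K \in \mathcal{A}_3$ decomposes as a direct product in $\mathcal{A}_1 \times \mathcal{A}_2$ is close in content to the open Conjecture~\ref{intro-conjecture}. The proof must therefore either invoke a strictly weaker MMP-based input --- for instance, that every abelian $K \in \mathcal{A}_3$ admits some $K$-equivariant birational model with a Mori fiber space of positive-dimensional base --- or bypass the step by a direct invariant-factor analysis of arbitrary abelian extensions of $\mathcal{A}_3$-groups by $\mathcal{B}_1$-groups, leveraging the known $p$-rank bounds for abelian subgroups of $\mathrm{Cr}_3(\mathbb{C})$. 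The case-by-case check for the four groups $G_1,\dots,G_4$ is routine but requires careful bookkeeping, since each $G_i$ supports non-split extensions by $\mathcal{A}_1$ whose invariant-factor structure must be verified to fall into the target.
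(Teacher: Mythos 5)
Your overall architecture coincides with the paper's: reduce the right-to-left inclusion to (i) $\mathcal{B}_2\bullet\mathcal{B}_2$, settled by Theorem \ref{intro-main-thm} together with the factorization $(\mathbb{Z}/4)^5=\mathbb{Z}/4\times(\mathbb{Z}/4)^4\in\mathcal{B}_1\times\mathcal{B}'_3$, and (ii) $\mathcal{B}_1\bullet\mathcal{B}'_3$, split into the part coming from threefold groups of product type (absorbed into $\mathcal{A}_2\bullet\mathcal{A}_2$ via Proposition \ref{prop-lemma}) and the extensions by the four exceptional groups $G_1,\dots,G_4$. Two remarks. First, the ``main obstacle'' you flag is not present in the paper's proof: the theorem is proved for $\mathcal{B}'_3$ read as the explicit eleven-entry list of Table 4, i.e.\ $\mathcal{A}'_3+\{G_1,\dots,G_4\}$, so the only input needed for the non-exceptional part is the chain $\mathcal{A}_1\bullet\mathcal{A}'_3\subseteq\mathcal{A}_2\bullet\mathcal{A}_2$ that you already wrote down; no equivariant MMP on threefolds and no unconditional description of $\mathcal{A}_3$ is invoked anywhere. (You are right that the displayed definition $\mathcal{B}'_3=\mathcal{A}_3+\{G_1,\dots,G_4\}$ would, taken literally, require Conjecture \ref{intro-conjecture}; the proof in the paper only covers the explicit list.)

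Second, and more importantly, the substantive content of the paper's proof is exactly the part you defer as ``routine but requires careful bookkeeping'': the $2\times 4=8$ cases $H\bullet G_i$ with $H\in\{\mathbb{Z}/n,(\mathbb{Z}/2)^2\}$ and $i=1,\dots,4$. These are handled by splitting into $p$-parts (Remark \ref{rem-gp}), computing the relevant Littlewood--Richardson products via Proposition \ref{prop-fulton} (e.g.\ $[\lambda_1]\cdot[3,3,2,1]$ for $H=\mathbb{Z}/2^{\lambda_1}$ against the $2$-part of $G_2=(\mathbb{Z}/8)^2\times\mathbb{Z}/4\times\mathbb{Z}/2$), and then matching every resulting type against the list $\mathcal{PB}'_4=\mathcal{B}_1\times\mathcal{B}'_3+\mathcal{B}_2\times\mathcal{B}_2$ of Table 6. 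This matching step is where the theorem could in principle fail, since several of the resulting groups have rank $5$ or $6$ and land in $\mathcal{PB}'_4$ only because of specific entries of that table (for instance $\mathbb{Z}/8\times(\mathbb{Z}/4)^3\times\mathbb{Z}/2$ sits in $\mathcal{A}_2\times\mathcal{A}_2$ as $(\mathbb{Z}/8\times\mathbb{Z}/4)\times((\mathbb{Z}/4)^2\times\mathbb{Z}/2)$, and $\mathbb{Z}/2^{\lambda_1}\times(\mathbb{Z}/4)^4$ needs the entry $\mathbb{Z}/n\times(\mathbb{Z}/4)^4$ of $\mathcal{B}_1\times\mathcal{B}'_3$). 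Asserting that this verification goes through, without performing it, leaves the proof incomplete: the eight computations are finite and elementary, but they \emph{are} the proof.
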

Hence, if Conjecture \ref{intro-conjecture} holds, then the answer to Question \ref{third-question} is positive for $n=~4$. This suggests that in higher dimensions, the class $\mathcal{B}_n$ is better behaved than $\mathcal{A}_n$. Theorem~\ref{intro-second-thm} together with Proposition \ref{prop-positive-negative} implies the following result.

\begin{corollary}
If Conjecture \ref{intro-conjecture} holds, then the answer to Question \ref{the-question} is positive for $n=4$ in the case of finite abelian groups.  
\end{corollary}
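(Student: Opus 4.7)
The plan is to derive this corollary formally from Theorem \ref{intro-second-thm}, Conjecture \ref{intro-conjecture}, and Proposition \ref{prop-positive-negative}; no further geometric input is required, since all of the substantive work already sits inside Theorem \ref{intro-second-thm}.

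First, assuming Conjecture \ref{intro-conjecture} we may replace $\mathcal{B}_3'$ by $\mathcal{B}_3$ on both sides of the identity in Theorem \ref{intro-second-thm}, obtaining
\[
\mathcal{B}_1 \bullet \mathcal{B}_3 \,+\, \mathcal{B}_2 \bullet \mathcal{B}_2 \;=\; \mathcal{B}_1 \times \mathcal{B}_3 \,+\, \mathcal{B}_2 \times \mathcal{B}_2.
\]
Next, using the commutativity of $\bullet$ and $\times$ recorded in Proposition \ref{prop-properties}, the index $i = 3$ case of the equality $\mathcal{B}_i \bullet \mathcal{B}_{4-i} = \mathcal{B}_i \times \mathcal{B}_{4-i}$ coincides with the $i = 1$ case, so the displayed identity is precisely the assertion that $\mathcal{B}_i \bullet \mathcal{B}_{4-i} = \mathcal{B}_i \times \mathcal{B}_{4-i}$ for every $i \in \{1,2,3\}$. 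In other words, Question \ref{third-question} has a positive answer for $n = 4$ conditional on the conjecture.

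Finally, I would invoke Proposition \ref{prop-positive-negative}, which translates a positive answer to Question \ref{third-question} into a positive answer to Question \ref{the-question} for finite abelian groups, and conclude. I expect no real obstacle here: the corollary is nothing more than the combinatorial packaging of Theorem \ref{intro-second-thm} with the conjectural description of $\mathcal{B}_3$, and the entire analytic content resides in the proof of Theorem \ref{intro-second-thm} itself.
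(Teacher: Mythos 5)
Your argument is exactly the paper's (implicit) proof: the corollary is stated as a direct consequence of Theorem \ref{intro-second-thm} and Proposition \ref{prop-positive-negative}, with Conjecture \ref{intro-conjecture} used only to replace $\mathcal{B}_3'$ by $\mathcal{B}_3$ and commutativity handling the case $i=3$. The proposal is correct and takes essentially the same route.
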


%We explain the structure of the paper. 

\subsection*{Acknowledgements} The author thanks his scientific advisor Konstantin Loginov for posing the problem and encouraging in writing the paper.

\section{Preliminaries}
In this section, we collect some preliminary results on finite abelian groups and Mori fiber spaces. 

\subsection{Extensions of finite abelian groups}

For any finite abelian group $G$, we can write 
$$G = \prod_{p} G_p,$$ 
%\quad \quad G_p \cong \mathbb{Z}/p^{n_1} \times \mathbb{Z}/p^{n_2} \times \ldots \times \mathbb{Z}/p^{n_r},
where $p$ is a prime number and $G_p$ is the $p$-Sylow subgroup of $G$. %and also $n_1 \ge n_2 \ge \ldots \ge n_r \ge 1$. 
We say that $G_p$ is the $p$-part of $G$. Also we denote $$G_{\neq p} = \prod_{q \neq p} G_q, \quad \quad \text{so that}\quad \quad G = G_p \times G_{\neq p}.$$ 
For $G_p \cong \mathbb{Z}/p^{n_1} \times \mathbb{Z}/p^{n_2} \times \ldots \times \mathbb{Z}/p^{n_r}$ we define the type of $G_p$ to be the vector $[n_1, n_2, \ldots, n_r]$ where $n_1 \ge n_2 \ge \ldots \ge n_r \ge 1.$
 
\begin{remark}
\label{rem-gp}
A sequence of finite abelian groups
\begin{equation}
\label{ex-HGK-exact}
  0 \rightarrow H \rightarrow G \rightarrow K \rightarrow 0
\end{equation} 
is exact if and only if for any prime number $p$ the $p$-parts $H_p, G_p, K_p$ of groups $H, G, K$, respectively, form an exact sequence
\begin{equation}
\label{ex-HGK-exact-p}
  0 \rightarrow H_p \rightarrow G_p \rightarrow K_p \rightarrow 0. 
\end{equation} 
In this case we say that \eqref{ex-HGK-exact-p} is the $p$-part of \eqref{ex-HGK-exact}. 
\end{remark}

For any type $\lambda = [\lambda_1, \ldots, \lambda_k]$ we associate the Young diagram with $k$ rows and $\lambda_i$ squares in the $i$-th row. For two Young diagrams $\lambda = [\lambda_1, \ldots, \lambda_k]$ and $\nu = [\nu_1, \ldots, \nu_q]$, one can define
their product $\lambda \cdot \nu$ as a formal linear combination of Young diagrams with non-negative coefficients, which is Littlewood–Richardson coefficient $c_{\lambda\nu}^{\mu}$ for $\mu = [\mu_1, \ldots, \mu_m]$, see [Fu1, §5.2].
 
\begin{proposition}[{\cite[Section 2]{Fu00}}] 
\label{prop-fulton}
Let $G_p$, $H_p$, and $K_p$ be finite abelian $p$-groups. 
%an exact sequence of finite abelian $p$-groups $$ 0 \rightarrow H_p \rightarrow G_p \rightarrow K_p \rightarrow 0.$$ 
Assume that $G_p$ has type $\mu = [\mu_1, \ldots, \mu_m]$, $H_p$ has type $\lambda = [\lambda_1, \ldots, \lambda_k]$, and $K_p$ has type $\nu = [\nu_1, \ldots, \nu_q]$. Then an extension $G_p=H_p \bullet K_p$ exists if and only if $c_{\lambda\nu}^{\mu} > 0.$
\end{proposition}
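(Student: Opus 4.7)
The plan is to translate the question into the language of Hall polynomials and invoke a classical theorem relating them to Littlewood--Richardson coefficients. For a fixed finite abelian $p$-group $G_p$ of type $\mu$, define
\[
g_{\lambda\nu}^{\mu}(p) = \#\bigl\{H' \leq G_p : H' \text{ is of type } \lambda \text{ and } G_p/H' \text{ is of type } \nu\bigr\}.
\]
An extension $G_p = H_p \bullet K_p$ with $H_p$ of type $\lambda$ and $K_p$ of type $\nu$ exists if and only if there is at least one subgroup $H' \leq G_p$ with $H' \cong H_p$ and $G_p/H' \cong K_p$, i.e.\ if and only if $g_{\lambda\nu}^{\mu}(p) > 0$. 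So the proposition reduces to the equivalence
\[
g_{\lambda\nu}^{\mu}(p) > 0 \iff c_{\lambda\nu}^{\mu} > 0.
\]

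For this equivalence I would invoke the classical theorem of P. Hall (see Macdonald, \emph{Symmetric Functions and Hall Polynomials}, Chapter II, or the survey \cite{Fu00}): the function $p \mapsto g_{\lambda\nu}^{\mu}(p)$ is the evaluation at $p$ of a polynomial $g_{\lambda\nu}^{\mu}(t) \in \mathbb{Z}[t]$ with non-negative integer coefficients, whose leading coefficient equals the Littlewood--Richardson coefficient $c_{\lambda\nu}^{\mu}$; moreover, the polynomial vanishes identically precisely when $c_{\lambda\nu}^{\mu} = 0$. Granting this, the forward direction is immediate: if $c_{\lambda\nu}^{\mu} > 0$, then $g_{\lambda\nu}^{\mu}(t)$ is a nonzero polynomial with positive leading coefficient and non-negative coefficients, so $g_{\lambda\nu}^{\mu}(p) > 0$ for every prime $p$. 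For the converse, if $c_{\lambda\nu}^{\mu} = 0$, then $g_{\lambda\nu}^{\mu}(t) \equiv 0$, so $g_{\lambda\nu}^{\mu}(p) = 0$ and no extension exists.

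The main obstacle is precisely the non-trivial input cited above: the polynomiality of $g_{\lambda\nu}^{\mu}(p)$ in $p$, the identification of its leading coefficient with $c_{\lambda\nu}^{\mu}$, and the vanishing statement when $c_{\lambda\nu}^{\mu} = 0$. I would not reprove this here --- it is the content of Hall's theorem, where the forward direction relies on an explicit construction of filtrations of $G_p$ from semistandard Young tableaux, and the reverse direction requires ruling out ``accidental'' subgroups via combinatorial arguments in the Hall algebra. Instead, I would cite \cite[Section 2]{Fu00}, the reference already mentioned in the proposition, and treat the rest as a direct translation between the counting quantity $g_{\lambda\nu}^{\mu}(p)$ and the existence of an abelian extension.
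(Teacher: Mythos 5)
Your proposal is correct and is essentially the same as the paper's treatment: the paper offers no proof at all, simply citing \cite[Section 2]{Fu00}, where precisely this equivalence (existence of a subgroup of type $\lambda$ with cotype $\nu$ in a group of type $\mu$ iff $c_{\lambda\nu}^{\mu}>0$, going back to Hall, Green and Klein) is recorded. Your translation from extensions to subgroup counts via $g_{\lambda\nu}^{\mu}(p)$ is the standard route, and the only nontrivial input is the classical theorem you correctly identify as the thing to cite rather than reprove.
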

 
\begin{example} Let $H = H_2 = \mathbb{Z}/4 \times \mathbb{Z}/2, K = K_2 = \mathbb{Z}/2 \times \mathbb{Z}/2 = (\mathbb{Z}/2)^2$. Then the product of Young diagrams: $$[2, 1] \cdot [1, 1] = [3, 2] + [3, 1, 1] + [2, 2, 1] + [2, 1, 1, 1].$$ Thus, $\{ H \} \bullet \{ K \} = \{ \, \mathbb{Z}/8 \times \mathbb{Z}/4, \mathbb{Z}/8 \times (\mathbb{Z}/2)^2, (\mathbb{Z}/4)^2 \times \mathbb{Z}/2, \mathbb{Z}/4 \times (\mathbb{Z}/2)^3 \, \}.$
\end{example} 
 
\begin{definition}
We denote by $\mathfrak{r}(G_p)$ the rank of $G$, that is, the minimum number of generators of $G$.
\end{definition}

\begin{remark}
\label{rem-rank-groups}
If $G$ is a finite abelian group and $\mathfrak{r}(G)\leq n$ then $G$ belongs to $\mathcal{PA}_n\subseteq \mathcal{A}_n$.  
Also, if $\mathfrak{r}(G_p) \le n$ for any prime number $p$, then $\mathfrak{r}(G) \le n$. In particular, %if $\mathfrak{r}(G) \le 4$, then $G$ belongs to $\mathcal{PA}_4$. Also, 
if $\mathfrak{r}(G_{\neq 2}) \le 3$ and $G_2 = H \times (\mathbb{Z}/2)^2$ where $\mathfrak{r}(H) \le 3$, then $G$ belongs to $\mathcal{PA}_4$.
\end{remark}

\iffalse
We will use the following result. 

\begin{proposition}[cf. {\cite[Corollary 3.2]{Lo24}}]
\label{prop-another-splitting}
Let $G$ be a finite abelian group such that $G$ fits in the exact sequence
\begin{equation}
\label{eq-one-more-exact-sequence}
\begin{tikzcd}
0 \ar[r] & H \arrow{r} & G \ar[r, "\phi"] & K \arrow{r} & 0.
\end{tikzcd}
\end{equation}
Assume that $H$ is cyclic. Then either \eqref{eq-one-more-exact-sequence} splits, or we have $G\simeq H^+\times K^-$ where $H^+$ is a cyclic group that contains $H$ as a subgroup, and $K^-=K/\phi(H)$. 
\end{proposition}
\fi

We list some elementary properties of the operations introduced in Definition \ref{defin-operations}:
\begin{proposition} 
\label{prop-properties}
In the notation of Definition \ref{defin-operations}, the following holds: 
\begin{enumerate}
\item
$\mathcal{A} \times \mathcal{B} \subseteq \mathcal{A} \bullet \mathcal{B}$,
\item
$\mathcal{A}\times \mathcal{B}=\mathcal{B}\times \mathcal{A}$,
\item
$(\mathcal{A} \times \mathcal{B}) \times \mathcal{C} = \mathcal{A} \times (\mathcal{B} \times \mathcal{C})$,
\item
$\mathcal{A} \bullet \mathcal{B}=\mathcal{B} \bullet \mathcal{A}$,% (recall that the sets $\mathcal{A}$ and $\mathcal{B}$ consist of finite abelian groups). 
\item
 $(\mathcal{A} \bullet \mathcal{B}) \bullet \mathcal{C} = \mathcal{A} \bullet (\mathcal{B} \bullet \mathcal{C})$. 
\end{enumerate}
\end{proposition}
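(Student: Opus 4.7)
The plan is to verify the five properties in turn, each of which amounts to unfolding the definitions of the operations $\times$ and $\bullet$ together with a standard fact about direct products or short exact sequences of finite abelian groups. Properties (1)--(3) are essentially formal: the canonical isomorphisms $H\times K\cong K\times H$ and $(H\times K)\times L\cong H\times (K\times L)$ give (2) and (3), while the split exact sequence $0\to H\to H\times K\to K\to 0$ witnesses the inclusion $H\times K\in H\bullet K$, which upon taking unions over $H\in\mathcal{A}$, $K\in\mathcal{B}$ yields (1).

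The substantive content is in (4) and (5). For commutativity of $\bullet$, I would pass to $p$-parts via Remark \ref{rem-gp} and invoke Proposition \ref{prop-fulton}: the existence of an extension $G_p=H_p\bullet K_p$ is controlled by the positivity of the Littlewood--Richardson coefficient $c_{\lambda\nu}^\mu$, and the classical symmetry $c_{\lambda\nu}^\mu=c_{\nu\lambda}^\mu$ immediately gives the reverse extension $G_p=K_p\bullet H_p$. Assembling the $p$-parts back together delivers $\mathcal{A}\bullet\mathcal{B}=\mathcal{B}\bullet\mathcal{A}$. (A self-contained alternative is Pontryagin duality: dualizing $0\to H\to G\to K\to 0$ produces $0\to K^\ast\to G^\ast\to H^\ast\to 0$, and finite abelian groups are isomorphic to their duals.)

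For associativity of $\bullet$, I would argue directly using exact sequences and the third isomorphism theorem. Given $G\in(\mathcal{A}\bullet\mathcal{B})\bullet\mathcal{C}$, there exist exact sequences $0\to L\to G\to M\to 0$ with $M\in\mathcal{C}$ and $0\to H\to L\to K\to 0$ with $H\in\mathcal{A}$, $K\in\mathcal{B}$. Setting $N=G/H$ and using $L/H\cong K$ yields the exact sequence $0\to K\to N\to G/L\cong M\to 0$, so $N\in\mathcal{B}\bullet\mathcal{C}$, and combined with $0\to H\to G\to N\to 0$ this exhibits $G\in\mathcal{A}\bullet(\mathcal{B}\bullet\mathcal{C})$. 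The reverse inclusion is symmetric.

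None of the steps is a genuine obstacle; the only place where something nontrivial is used is the commutativity of $\bullet$, where I rely on the symmetry of the Littlewood--Richardson coefficients supplied by Proposition \ref{prop-fulton}.
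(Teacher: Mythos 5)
Your proposal is correct. For items (1)--(3) and for commutativity of $\bullet$ you follow essentially the same route as the paper: the paper dismisses (1)--(3) as obvious and derives (4) and (5) from Proposition \ref{prop-fulton}, Remark \ref{rem-gp}, and the commutativity and associativity of the product of Young diagrams. Where you genuinely diverge is in (5): instead of reducing to $p$-parts and citing associativity of the Littlewood--Richardson product, you give a direct group-theoretic argument via the third isomorphism theorem, splicing the two exact sequences $0\to L\to G\to M\to 0$ and $0\to H\to L\to K\to 0$ into $0\to K\to G/H\to M\to 0$, with the reverse inclusion obtained by pulling back along the quotient map. This is more elementary and more self-contained -- it does not depend on Proposition \ref{prop-fulton} at all and would work verbatim for arbitrary abelian groups -- whereas the paper's route is shorter to state given that Fulton's machinery is already set up for the case analysis in Section 3. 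Your Pontryagin-duality alternative for (4) is likewise a valid self-contained substitute for the symmetry $c_{\lambda\nu}^{\mu}=c_{\nu\lambda}^{\mu}$. The only minor gloss is the phrase ``the reverse inclusion is symmetric'' in (5); it is not literally symmetric (one direction takes a quotient, the other a preimage), but the dual construction is standard and unproblematic.
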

\begin{proof}
Properties (1), (2) and (3) are obvious. Properties (4) and (5) follow from the Proposition \ref{prop-fulton} and Remark \ref{rem-gp} together with commutativity and associativity of the product of Young diagrams, cf. {\cite[Chapter 5]{Fu06}}. 
\end{proof}

\subsection{Mori fiber spaces}
We work over the field of complex numbers $\mathbb{C}$ unless stated otherwise. 
For the language of the minimal model program (the MMP for short) we refer to \cite{KM98}.

\label{subsec-mfs}
Let $G$ be a finite group. 
Recall that a normal projective $G$-variety~$X$ is called $G\mathbb{Q}$-factorial, if every $G$-invariant Weil divisor on~$X$ is $\mathbb{Q}$-Cartier. 
A \emph{Mori fiber space} is a $G\mathbb{Q}$-factorial
variety $X$ 
with at worst terminal singularities together with a
$G$-equivariant contraction $f\colon X\to Z$ to a normal variety $Z$ such that $\rho^G(X/Z)=1$
and
$-K_X$ is ample over $Z$. %For brevity, we will usually  say that $f\colon X\to Z$ is a Mori fiber space.
%If $Z$ is a point, we say that $X$ is a $G\mathbb{Q}$-Fano variety. 

The next proposition shows that the answer to Question \ref{the-question} is positive for $n=3$ in the case of finite abelian groups. 

\begin{proposition}
\label{prop-mfs-pt}
If a finite abelian group $G$ faithfully acts on a Mori fiber space $f\colon X \to Z$ with $\dim X\leq 3$ and $\dim Z>0$ then $G$ belongs to $\mathcal{PA}_3$. In particular, $G$ faithfully acts on $S\times \mathbb{P}^1$ where $S$ is a smooth rational surface. 
\end{proposition}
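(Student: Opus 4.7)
The plan is to apply Remark \ref{rem-MFS-PT} to extract an abelian extension from the Mori fiber structure and then use Proposition \ref{intro-proposition-ext-1-2} to split it. Set $n=\dim X\leq 3$ and $i=\dim Z$, so that $1\leq i\leq n-1$ and hence $i,\,n-i\in\{1,2\}$. Remark \ref{rem-MFS-PT}, applied to the $G$-equivariant Mori fiber space $f\colon X\to Z$, yields an exact sequence
\[
0\to H\to G\to K\to 0,
\]
in which $K\in\mathcal{B}_i$ acts faithfully on $Z$ and $H\in\mathcal{B}_{n-i}$ acts faithfully on the generic fiber of $f$. Because $\mathcal{B}_1=\mathcal{A}_1$ and $\mathcal{B}_2=\mathcal{A}_2$, we may regard $H\in\mathcal{A}_{n-i}$ and $K\in\mathcal{A}_i$.

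The next step is to observe that in each of the three admissible pairs $(i,n-i)\in\{(1,1),(1,2),(2,1)\}$ at least one factor lies in $\mathcal{A}_1$. Thus the extension above is of the form $\mathcal{A}_1\bullet\mathcal{A}_1$ or $\mathcal{A}_1\bullet\mathcal{A}_2$, using the commutativity of $\bullet$ from Proposition \ref{prop-properties}(4) if necessary. Both of these equal the corresponding direct products by Proposition \ref{intro-proposition-ext-1-2}, so $G\simeq H\times K$. Using the evident inclusion $\mathcal{A}_1\subseteq\mathcal{A}_2$, this places $G$ in $\mathcal{A}_1\times\mathcal{A}_2=\mathcal{PA}_3$, which proves the first assertion.

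For the ``In particular'' statement I would write $G=A\times B$ with $A\in\mathcal{A}_1$ and $B\in\mathcal{A}_2$. The group $A$ is a finite subgroup of $\mathrm{PGL}_2(\mathbb{C})$ and acts biregularly on $\mathbb{P}^1$, while $B\subseteq\mathrm{Cr}_2(\mathbb{C})$ can be regularized on a smooth projective rational surface $S$ by a standard $B$-equivariant resolution of indeterminacies on $\mathbb{P}^2$. Then $A\times B$ acts biregularly on $S\times\mathbb{P}^1$.

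The only real content is the appeal to Proposition \ref{intro-proposition-ext-1-2}; everything else is bookkeeping on dimension indices together with the equalities $\mathcal{B}_j=\mathcal{A}_j$ for $j\leq 2$. The only step that deserves an explicit invocation rather than a reference is the equivariant regularization producing the smooth rational surface $S$, but this is well known and is not the main obstacle.
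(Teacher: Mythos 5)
Your proof is correct and follows essentially the same route as the paper: apply Remark \ref{rem-MFS-PT} to obtain $G\in\mathcal{A}_1\bullet\mathcal{A}_2$ (using $\mathcal{B}_j=\mathcal{A}_j$ for $j\leq 2$ and $\mathcal{A}_1\subseteq\mathcal{A}_2$), then split the extension via Proposition \ref{intro-proposition-ext-1-2}. Your extra bookkeeping on the cases $\dim X<3$ and the explicit regularization argument for the action on $S\times\mathbb{P}^1$ are fine elaborations of steps the paper leaves implicit.
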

\begin{proof}
According to Remark \ref{rem-MFS-PT}, we have $G\in \mathcal{A}_1\bullet \mathcal{A}_2$. By
 Proposition \ref{intro-proposition-ext-1-2} we have $G\in \mathcal{A}_1\times \mathcal{A}_2=\mathcal{PA}_3$. 
\end{proof}

The next useful remark follows from the Lefschetz principle.

\begin{remark}
\label{remark-lefschetz}
Let $\mathbb{K}$ be a field of characteristic $0$. Assume that a finite group $G$ is isomorphic to a subgroup of $\mathrm{Bir}(X)$ where $X$ is a rational (resp., rationally connected) variety of dimension $n$ defined over $\mathbb{K}$. Then $G$ is isomorphic to a subgroup of $\mathrm{Bir}(X')$ where $X'$ is a rational (resp., rationally connected) variety of dimension $n$ defined over~$\mathbb{C}$. In particular, the classes of groups $\mathcal{A}_n$ and $\mathcal{B}_n$ do not depend on the field $\mathbb{K}$ once it is algebraically closed and of characteristic $0$.
\end{remark}

Now, we generalize Proposition \ref{prop-mfs-pt}. 

\begin{proposition}
\label{prop-positive-negative}
%Fix a natural number $n$. 
Assume that the answer to Question \ref{third-question} is positive for the fixed natural number $n$. In other words, for any $i$ such that $1\leq i\leq n-1$, one has 
\[
\mathcal{B}_i\bullet \mathcal{B}_{n-i}= \mathcal{B}_i\times \mathcal{B}_{n-i}.%\subseteq\mathcal{PB}_n.
\] 
Then the answer to Question \ref{the-question} in dimension $n$ is positive in the case of finite abelian groups.
\end{proposition}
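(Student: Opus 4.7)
My plan is to follow the template of Proposition \ref{prop-mfs-pt}, replacing the use of Proposition \ref{intro-proposition-ext-1-2} by the stronger hypothesis of the present proposition, and to proceed by induction on $n$; the base cases $n \le 3$ are covered by Proposition \ref{prop-mfs-pt} together with the elementary case $n=1$.

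For the inductive step, I would take a finite abelian group $G$ acting faithfully on a Mori fiber space $f\colon X \to Z$ with $\dim X = n$ and $1 \le i := \dim Z \le n-1$. First, Remark \ref{rem-MFS-PT} furnishes an exact sequence $0 \to H \to G \to K \to 0$ with $H \in \mathcal{B}_{n-i}$ and $K \in \mathcal{B}_i$. The assumed equality $\mathcal{B}_i \bullet \mathcal{B}_{n-i} = \mathcal{B}_i \times \mathcal{B}_{n-i}$ then splits $G$, yielding $G = H' \times K'$ with $H' \in \mathcal{B}_{n-i}$ and $K' \in \mathcal{B}_i$.

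Since both $H'$ and $K'$ act faithfully on rationally connected varieties of dimension strictly less than $n$, the inductive hypothesis will supply terminal Fano varieties $F_1$ of dimension $n-i$ and $F_2$ of dimension $i$ carrying faithful actions of $H'$ and $K'$, respectively. Concretely, running $H'$-equivariant MMP on an RC variety witnessing $H' \in \mathcal{B}_{n-i}$ produces an $H'$-MFS $Y' \to W$ of dimension $n-i$: if $\dim W = 0$ then $Y'$ is already a terminal Fano; otherwise the inductive hypothesis applies in dimension $n - i < n$. The same reasoning handles $K'$. The product $F_1 \times F_2$ then carries a faithful action of $G = H' \times K'$ and is a terminal Fano of dimension $n$: the anticanonical divisor $-K_{F_1 \times F_2} = -\pi_1^* K_{F_1} - \pi_2^* K_{F_2}$ equals the external tensor product of the ample bundles $-K_{F_i}$ and is therefore ample, while terminality is preserved under products in characteristic zero.

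The only nontrivial geometric ingredient is the preservation of terminal singularities under products of $\mathbb{Q}$-Gorenstein varieties, which is standard and can be checked via a product of log resolutions; the rest of the proof is a direct combination of Remark \ref{rem-MFS-PT}, the hypothesis, and equivariant MMP, and should proceed without difficulty.
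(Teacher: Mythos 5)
Your proposal is correct and follows essentially the same route as the paper's proof: induction on $n$, Remark \ref{rem-MFS-PT} to produce the extension $0\to H\to G\to K\to 0$ with $H\in\mathcal{B}_{n-i}$, $K\in\mathcal{B}_i$, the hypothesis to split $G$ as a direct product, and equivariant MMP plus the inductive hypothesis to place each factor on a terminal Fano whose product carries the faithful $G$-action. The only differences are cosmetic: you spell out the ampleness of $-K_{F_1\times F_2}$ and the preservation of terminality under products, which the paper leaves implicit.
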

\begin{proof}
We use induction on $n$, the case $n=1$ being trivial. Assume that the claim holds for any $k\leq n-1$. 

Assume that $G$ faithfully acts on a Mori fiber space $f\colon X\to Z$ such that $X$ is rationally connected with $\dim X=n$ and $\dim Z=i>0$. Then there is an exact sequence 
\[
0\to H\to G\to K\to 0
\]
where $K\subseteq\mathrm{Aut}(Z)$, $H\subseteq\mathrm{Aut}(X_\eta)$ where $X_\eta$ is the schematic fiber of $f$. Thus, $K\in \mathcal{B}_{i}$ and $H\in \mathcal{B}_{n-i}$ (cf. Remark \ref{remark-lefschetz}). Thus $G=H\bullet K\in \mathcal{B}_i\bullet \mathcal{B}_{n-i}$. 
By assumption, we have that $G=H'\times K'$ where $H'\in \mathcal{B}_j, K'\in \mathcal{B}_{n-j}$ for some $1\leq j\leq n-1$. 

Running $H'$-MMP, we may assume that $H'$ faithfully acts on a Mori fiber space $f_1\colon X_1\to Z_1$. Similarly, running $K'$-MMP, we may assume that $K'$ faithfully acts on a Mori fiber space $f_2\colon X_2\to Z_2$. 
By inductive assumption, $H'$ faithfully acts on a terminal Fano variety $X'_1$ of dimension $j$,  $H'$ faithfully acts on a terminal Fano variety $X_2$ of dimension $n-j$. Hence, $G$ faithfully acts on a terminal Fano variety $X=X_1\times X_2$ of dimension $n$. This proves the claim.  
\end{proof}

\iffalse
\begin{definition}
We say: the group $G$ is extension of $K$ by group $H$, if there exists an exact sequence: $$0 \rightarrow H \rightarrow G \rightarrow K \rightarrow 0.$$ %Based on the context we willl just say: group $G$ is an extension of $K$ by $H$. 
We will denote it as follows: $G=H \bullet K$.
\end{definition} 
 
\begin{definition}
Consider two nonempty sets of groups $\mathcal{A}$ and $\mathcal{B}$. We define $$\mathcal{A} \times \mathcal{B} := \{\, C = H \times K \ | \ H \in \mathcal{A},\ H \in \mathcal{B} \,\}, $$ and $$\mathcal{A} \bullet \mathcal{B} := \{\,  C = H \bullet K \ | \ H \in \mathcal{A}, K \in \mathcal{B} \, \}.$$ 
In the latter formula, we mean that we take all abelian extensions $H \bullet K$ of $H$ by $K$.  
Also put $\mathcal{A} + \mathcal{B} := \mathcal{A} \cup \mathcal{B}$.
\end{definition}
 
\begin{remark}
In the previous notation, for any $A, B$: $A \times B \subseteq A \bullet B$, because for two groups $H$ and $K$ It always exists trivial extension $H \bullet K = H \times K.$ Also we have associativity: $(A \times B) \times C = A \times (B \times C)$.
\end{remark}
 \fi
 
\section{Computing extensions}

%From now on, we will assume that all groups are finite and abelian.
This section is devoted to the proofs of Theorem \ref{intro-main-thm} and Theorem \ref{intro-second-thm}. We use the notation as in the introduction. 

\begin{proposition}
\label{prop-lemma}
Put $\mathcal{A}'_3=\mathcal{PA}_3=\mathcal{A}_1\times\mathcal{A}_2$. 
Then
\begin{enumerate}
\item
$\mathcal{A}_1 \times \mathcal{A}'_3 \subseteq \mathcal{A}_2 \times \mathcal{A}_2$,
\item
$\mathcal{A}_1 \bullet \mathcal{A}'_3 \subseteq \mathcal{A}_2 \bullet \mathcal{A}_2$,
\item
$\mathcal{A}_2 \times \mathcal{A}_2 \setminus \mathcal{A}_1 \times \mathcal{A}'_3 = \{(\mathbb{Z}/3)^6, (\mathbb{Z}/4)^4\times (\mathbb{Z}/2)^2 \}.$
\end{enumerate}
\end{proposition}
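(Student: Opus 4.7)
The plan is to leverage a single combinatorial fact that underpins all three claims. Comparing Proposition \ref{prop-a1} with Theorem \ref{prop-a2}, every group in $\mathcal{A}_1\times\mathcal{A}_1$ belongs to family (1), (2), or (5) of Theorem \ref{prop-a2}, and conversely each of these families is realized as such a product (e.g.\ $\mathbb{Z}/k\times(\mathbb{Z}/2)^2$ produces family (2) when $k$ is even and family (1) when $k$ is odd, while $(\mathbb{Z}/2)^2\times(\mathbb{Z}/2)^2=(\mathbb{Z}/2)^4$ gives family (5)). Writing $B_1=(\mathbb{Z}/3)^3$ and $B_2=(\mathbb{Z}/4)^2\times\mathbb{Z}/2$ for families (4) and (3), this yields $\mathcal{A}_1\times\mathcal{A}_1=\mathcal{A}_2\setminus\{B_1,B_2\}$. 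Claim (1) is then immediate, since $\mathcal{A}_1\times\mathcal{A}'_3=\mathcal{A}_1\times\mathcal{A}_1\times\mathcal{A}_2\subseteq\mathcal{A}_2\times\mathcal{A}_2$.

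For (2) I take $G\in\mathcal{A}_1\bullet\mathcal{A}'_3$ and fix an exact sequence $0\to H_1\to G\to H_2\times L\to 0$ with $H_1,H_2\in\mathcal{A}_1$ and $L\in\mathcal{A}_2$. Let $G'$ be the preimage of $H_2$ under $G\twoheadrightarrow H_2\times L$; then $G'$ fits into $0\to H_1\to G'\to H_2\to 0$, so $G'\in\mathcal{A}_1\bullet\mathcal{A}_1=\mathcal{A}_1\times\mathcal{A}_1\subseteq\mathcal{A}_2$ by Proposition \ref{intro-proposition-ext-1-2} and the observation above. Since $G/G'\cong L\in\mathcal{A}_2$, the sequence $0\to G'\to G\to L\to 0$ exhibits $G$ as an element of $\mathcal{A}_2\bullet\mathcal{A}_2$.

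The substance lies in (3). Both listed groups visibly belong to $\mathcal{A}_2\times\mathcal{A}_2$ via $B_1\times B_1$ and $B_2\times B_2$. For the converse, take any $G=G_1\times G_2\in\mathcal{A}_2\times\mathcal{A}_2$: if some $G_i$ lies in $\mathcal{A}_1\times\mathcal{A}_1$, then $G\in\mathcal{A}_1\times\mathcal{A}_1\times\mathcal{A}_2$, so we may assume $G_1,G_2\in\{B_1,B_2\}$. Up to reordering this gives exactly three isomorphism classes: $B_1\times B_1=(\mathbb{Z}/3)^6$, $B_2\times B_2=(\mathbb{Z}/4)^4\times(\mathbb{Z}/2)^2$, and the mixed product $B_1\times B_2=(\mathbb{Z}/3)^3\times(\mathbb{Z}/4)^2\times\mathbb{Z}/2$. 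The mixed product is \emph{not} an exception, thanks to the alternative decomposition
\[
(\mathbb{Z}/3)^3\times(\mathbb{Z}/4)^2\times\mathbb{Z}/2\ \cong\ \mathbb{Z}/12\times\mathbb{Z}/3\times(\mathbb{Z}/12\times\mathbb{Z}/2),
\]
whose three factors lie in $\mathcal{A}_1$, $\mathcal{A}_1$, and $\mathcal{A}_2$ (family (1)) respectively.

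It remains to verify that neither $(\mathbb{Z}/3)^6$ nor $(\mathbb{Z}/4)^4\times(\mathbb{Z}/2)^2$ admits \emph{any} decomposition $A\times C$ with $A\in\mathcal{A}_1\times\mathcal{A}_1$ and $C\in\mathcal{A}_2$. For the first, a $3$-rank count suffices: any $A\in\mathcal{A}_1\times\mathcal{A}_1$ has $\mathfrak{r}(A_3)\leq 2$ and any $C\in\mathcal{A}_2$ has $\mathfrak{r}(C_3)\leq 3$, giving $\mathfrak{r}((A\times C)_3)\leq 5<6$. The second case is where I expect the main obstacle, and it is handled by a Young-diagram enumeration on the $2$-part (cf.\ Remark \ref{rem-gp}): the $2$-part of $(\mathbb{Z}/4)^4\times(\mathbb{Z}/2)^2$ has type $[2,2,2,2,1,1]$, and this must split as the multiset union of one admissible $\mathcal{A}_1\times\mathcal{A}_1$ type (namely $[a,b]$, $[a,1,1]$ with $a\geq 1$, or $[1,1,1,1]$) and one admissible $\mathcal{A}_2$ type (the same list augmented by $[2,2,1]$). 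Running through the finitely many candidate splittings shows that in every case the complementary type is not realized by any $\mathcal{A}_2$-group, finishing the proof.
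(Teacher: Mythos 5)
Your proof is correct and follows the same overall strategy as the paper: identify $\mathcal{A}_1\times\mathcal{A}_1$ as $\mathcal{A}_2\setminus\{(\mathbb{Z}/3)^3,(\mathbb{Z}/4)^2\times\mathbb{Z}/2\}$ and reduce all three claims to that observation. There are two points of genuine divergence. For claim (2), the paper writes $\mathcal{A}_1\bullet\mathcal{A}'_3=\mathcal{A}_1\bullet(\mathcal{A}_1\bullet\mathcal{A}_2)=(\mathcal{A}_1\bullet\mathcal{A}_1)\bullet\mathcal{A}_2$, invoking Proposition \ref{intro-proposition-ext-1-2} and the associativity of $\bullet$ from Proposition \ref{prop-properties} (itself proved via Littlewood--Richardson coefficients); you instead give a direct group-theoretic argument, pulling back $H_2$ along the surjection to produce an intermediate subgroup $G'$ with $G'\in\mathcal{A}_1\bullet\mathcal{A}_1$ and $G/G'\cong L$. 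Your route is more elementary and self-contained, at the cost of re-proving by hand the one instance of associativity that is needed. For claim (3), the paper stops at the inclusion $(\mathcal{A}_2\times\mathcal{A}_2)\setminus(\mathcal{A}_1\times\mathcal{A}_1\times\mathcal{A}_2)\subseteq(\mathcal{A}_2\setminus\mathcal{A}_1\times\mathcal{A}_1)\times(\mathcal{A}_2\setminus\mathcal{A}_1\times\mathcal{A}_1)$ and asserts the answer; you supply the two steps it leaves implicit, namely that the mixed product $(\mathbb{Z}/3)^3\times(\mathbb{Z}/4)^2\times\mathbb{Z}/2$ does lie in $\mathcal{A}_1\times\mathcal{A}_1\times\mathcal{A}_2$ (your decomposition $\mathbb{Z}/12\times\mathbb{Z}/3\times(\mathbb{Z}/12\times\mathbb{Z}/2)$ checks out), and that the two exceptional groups admit no such decomposition (the $3$-rank bound and the multiset analysis of $2$-part types are both correct). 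These verifications make your write-up more complete than the published argument.
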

 
\begin{proof} 
By Proposition \ref{prop-properties}, we have
\[
\mathcal{A}_1 \times \mathcal{A}'_3 = \mathcal{A}_1 \times (\mathcal{A}_1 \times \mathcal{A}_2) = (\mathcal{A}_1 \times \mathcal{A}_1) \times \mathcal{A}_2 \subseteq \mathcal{A}_2 \times \mathcal{A}_2.
\]

For the second claim write (use Proposition \ref{intro-proposition-ext-1-2} for the second equality):
\[
\mathcal{A}_1 \bullet \mathcal{A}'_3 = \mathcal{A}_1 \bullet (\mathcal{A}_1 \times \mathcal{A}_2) =  \mathcal{A}_1 \bullet (\mathcal{A}_1 \bullet \mathcal{A}_2) = (\mathcal{A}_1 \bullet \mathcal{A}_1) \bullet \mathcal{A}_2 \subseteq \mathcal{A}_2 \bullet \mathcal{A}_2.
\]

We prove the third claim. We have $\mathcal{A}_1 \times \mathcal{A}'_3= \mathcal{A}_1 \times \mathcal{A}_1 \times \mathcal{A}_2$.
Note that 
\[
\mathcal{A}_1 \times \mathcal{A}_1 = \{ \mathbb{Z}/n \times \mathbb{Z}/k, \mathbb{Z}/2m \times (\mathbb{Z}/2)^2, (\mathbb{Z}/2)^4 \}_{n\geq 1,k\geq 1, m \geq 1} \subseteq \mathcal{A}_2,
\] 
and $\mathcal{A}_2\setminus (\mathcal{A}_1 \times \mathcal{A}_1)=\{(\mathbb{Z}/4)^2\times \mathbb{Z}/2,(\mathbb{Z}/3)^3\}$. Thus, we have 
\[
(\mathcal{A}_2\times \mathcal{A}_2)\setminus (\mathcal{A}_1 \times \mathcal{A}_1\times \mathcal{A}_2)\subseteq (\mathcal{A}_2\setminus \mathcal{A}_1 \times \mathcal{A}_1)\times (\mathcal{A}_2\setminus \mathcal{A}_1 \times \mathcal{A}_1).
\]
Hence $(\mathcal{A}_2\times \mathcal{A}_2)\setminus (\mathcal{A}_1 \times \mathcal{A}_1\times \mathcal{A}_2)=\{(\mathbb{Z}/4)^4\times (\mathbb{Z}/2)^2,(\mathbb{Z}/3)^6\}$, and the claim follows. 
\end{proof}

\iffalse
\begin{definition} Define set «groups of product type»: $\mathcal{PA}_1 = \mathcal{A}_1$, $\mathcal{PA}_2 = \mathcal{A}_1 \times \mathcal{A}_1$, $\mathcal{PA}_n = \sum_{i = 1}^{n-1} P_i \times P_{n - i}.$ In this case $\mathcal{PA}_3 = \mathcal{A}_1 \times \mathcal{A}_2, \mathcal{PA}_4 = \mathcal{A}_1 \times P_3 + \mathcal{A}_2 \times \mathcal{A}_2.$ Here and further, we will say to call groups of the product type only $\mathcal{PA}_4.$
\end{definition} 
\fi
 
%We have $\mathcal{A}_2 \bullet \mathcal{A}_2\setminus \mathcal{A}_2 \times \mathcal{A}_2 = (\mathcal{A}_1 \bullet \mathcal{PA}_3 + \mathcal{A}_2 \bullet \mathcal{A}_2) \setminus (\mathcal{A}_1 \times \mathcal{PA}_3 + \mathcal{A}_2 \times \mathcal{A}_2).$

Now we are ready to prove the first of our main results.
 
\begin{thm}[=Theorem \ref{intro-main-thm}]
\label{thm-2}
Put $\mathcal{A}'_3=\mathcal{PA}_3=\mathcal{A}_1\times\mathcal{A}_2$. Then 
\[
(\mathcal{A}_1 \bullet \mathcal{A}'_3 + \mathcal{A}_2 \bullet \mathcal{A}_2) \setminus (\mathcal{A}_1 \times \mathcal{A}'_3 + \mathcal{A}_2 \times \mathcal{A}_2) = \mathcal{A}_2 \bullet \mathcal{A}_2\setminus \mathcal{A}_2 \times \mathcal{A}_2= \{ (\mathbb{Z}/4)^5 \}.
\]
%$\mathcal{A}_2 \bullet \mathcal{A}_2\setminus \mathcal{A}_2 \times \mathcal{A}_2 $
\end{thm}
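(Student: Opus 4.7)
The first equality in the statement is formal: by Proposition~\ref{prop-lemma}(1)--(2) we have $\mathcal{A}_1\times\mathcal{A}'_3 \subseteq \mathcal{A}_2\times\mathcal{A}_2$ and $\mathcal{A}_1\bullet\mathcal{A}'_3\subseteq \mathcal{A}_2\bullet\mathcal{A}_2$, so each sum collapses to its larger summand and the set difference reduces at once to $\mathcal{A}_2\bullet\mathcal{A}_2 \setminus \mathcal{A}_2\times\mathcal{A}_2$. The plan is therefore to prove that this difference equals $\{(\mathbb{Z}/4)^5\}$, which splits into two directions.

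For the inclusion $\supseteq$, I would take $H=K=(\mathbb{Z}/4)^2\times\mathbb{Z}/2\in\mathcal{A}_2$, whose $2$-parts both have type $[2,2,1]$; by Proposition~\ref{prop-fulton} it suffices to exhibit one Littlewood--Richardson tableau of skew shape $[2,2,2,2,2]/[2,2,1]$ with content $[2,2,1]$, e.g.\ placing $1$ in the lone cell of row $3$, $1,2$ in row $4$, and $2,3$ in row $5$ (the reverse reading word $1,2,1,3,2$ is lattice). Conversely, $(\mathbb{Z}/4)^5$ has trivial odd part, so in any factorization $(\mathbb{Z}/4)^5=H'\times K'$ with $H',K'\in\mathcal{A}_2$ both odd parts are trivial and the $2$-parts multiply multiplicatively to give type $[2,2,2,2,2]$; each of $H'_2,K'_2$ must then be an allowed $\mathcal{A}_2$ $2$-part type consisting only of $2$'s, hence one of $[\,]$, $[2]$, $[2,2]$, and since $2+2<5$ no such decomposition exists.

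For the inclusion $\subseteq$, let $G=H\bullet K\in\mathcal{A}_2\bullet\mathcal{A}_2$ with $G\neq(\mathbb{Z}/4)^5$. By Remark~\ref{rem-gp} the extension decomposes prime-by-prime, so analyze each $G_p$ in turn. For $p\geq 5$ both $H_p,K_p$ have rank at most $2$, so $G_p$ has rank at most $4$ and splits as a direct product of two rank-$\leq 2$ $p$-groups. For $p=3$ the only non-rank-$\leq 2$ allowed type is $[1,1,1]$, realized only by the isolated group $(\mathbb{Z}/3)^3$; the possible shapes of $G_3$ are enumerated via Proposition~\ref{prop-fulton} (e.g.\ $[1,1,1]\cdot[1,1,1]=[2,2,2]+[2,2,1,1]+[2,1,1,1,1]+[1^6]$) and each is seen to admit a splitting $G_3=H'_3\times K'_3$ into allowed $3$-part types. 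For $p=2$ one exhausts the pairs $(H_2,K_2)$ among the allowed types $\{[\leq 2],\,[a,1,1],\,[2,2,1],\,[1,1,1,1]\}$, computes the possible $G_2$ via Proposition~\ref{prop-fulton}, and verifies that in every case except $([2,2,1],[2,2,1])\mapsto[2,2,2,2,2]$ the group $G_2$ decomposes as a direct product of two allowed $2$-part types. Finally, glue the prime-by-prime decompositions into $G=H'\times K'$, routing the odd-primary summands so as to respect the $\mathcal{A}_2$-compatibility constraints (a $2$-part $[a,1,1]$ allows only a cyclic odd companion, $[2,2,1]$ and $[1,1,1,1]$ force a trivial odd companion, and $3$-part $[1,1,1]$ forces all other primary parts trivial).

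The main obstacle is the exhaustive $p=2$ case analysis, because the allowed $2$-part menu of $\mathcal{A}_2$ is structurally irregular---$[2,2,2]$ is forbidden while both $[2,2,1]$ and $[2,1,1]$ are allowed---so each Littlewood--Richardson product of two allowed types must be inspected by hand and the exception $[2,2,2,2,2]$ carved out sharply. A secondary challenge is the global assembly: when $G$ simultaneously carries a ``heavy'' $2$-part (forcing a trivial or cyclic odd companion) and a ``heavy'' $3$-part (coming from $(\mathbb{Z}/3)^3$, which forces everything else trivial), the prime-wise decompositions must be coordinated across all primes so that both factors $H'$ and $K'$ genuinely lie in $\mathcal{A}_2$.
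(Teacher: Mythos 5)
Your proposal follows essentially the same route as the paper's proof: reduce to $\mathcal{A}_2\bullet\mathcal{A}_2\setminus\mathcal{A}_2\times\mathcal{A}_2$ via Proposition~\ref{prop-lemma} and then run an exhaustive Littlewood--Richardson case analysis (Proposition~\ref{prop-fulton}) over the pairs of $\mathcal{A}_2$-types together with rank bounds at the remaining primes, with the product $[2,2,1]\cdot[2,2,1]$ producing the unique exception $[2,2,2,2,2]=(\mathbb{Z}/4)^5$. The only differences are presentational: the paper organizes the fifteen cases by type-pair rather than prime-by-prime, and it leaves the verification that $(\mathbb{Z}/4)^5\notin\mathcal{A}_2\times\mathcal{A}_2$ implicit, whereas you spell it out.
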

\begin{proof}  
The first equality follows from Proposition \ref{prop-lemma}. Hence it is enough to prove the second equality. Put $\mathcal{PA}'_4=\mathcal{A}_1 \times \mathcal{A}'_3 + \mathcal{A}_2 \times \mathcal{A}_2$.

Consider a group $G\in \mathcal{A}_2 \bullet \mathcal{A}_2$. We will show that either $G$ belongs to $\mathcal{PA}'_4$, or $G=(\mathbb{Z}/4)^5$. 
We have an exact sequence 
\begin{equation}
\label{exact-sequence-product-type-2}
0 \rightarrow H \rightarrow G \rightarrow K \rightarrow 0,
\end{equation}
where $H, K \in \mathcal{A}_2$. 
We will consider all possibilities for groups $H$ and $K$. 
%Begin with the case where $H \in \mathcal{A}_1$, $K \in \mathcal{PA}_3$. 
The notation $(n \bullet k)$ means that we consider the set $H \bullet K$ where $H$ (resp., $K$) has the number $n$ (resp., $k$) in the list $\mathcal{A}_2$. 
For simplicity, in the formal linear combination of Young diagrams we put $c_{\lambda\nu}^{\mu} = 1$ if $c_{\lambda\nu}^{\mu} > 0$. Since the operation $\bullet$ is commutative (cf. Proposition \ref{prop-properties}), it is enough to consider the case when $n\leq k$. 

\begin{enumerate}
\item
 $(1\bullet 1)$ 
 \iffalse
 Consider $p$-part of groups for any prime $p$. Put $H_p = \mathbb{Z}/p^{\lambda_1} \times \mathbb{Z}/p^{\lambda_2}, K_p = \mathbb{Z}/p^{\nu_1} \times \mathbb{Z}/p^{\nu_2}$, for $\lambda_1 \ge \lambda_2, \nu_1 \ge \nu_2$. We can assume, that $\lambda_2 \ge 1, \nu_2 \ge 1$, otherwise $G$ is $\mathcal{PA}_4$. The product of Young diagram: 
$$[\lambda_1, \lambda_2] \cdot [\nu_1, \nu_2] = [x_1, x_2] + [x_1, x_2, x_3] + [x_1, x_2, x_3, x_4]$$
(the sum for some $x_1 \ge \nu_1, x_2 \ge \nu_2, x_3 \ge 1, x_4 \ge 1$). 
\fi 
Since $\mathfrak{r}(H) \le 2$ and $\mathfrak{r}(K) \le 2$, we have $\mathfrak{r}(G) \le 4$. It follows that $G$ belongs to $\mathcal{A}_2\times \mathcal{A}_2\subset \mathcal{PA}'_4$. 
 
\item
$(1\bullet 2)$ Consider the $2$-parts of $H$ and $K$. Put $H_2 = \mathbb{Z}/2^{\lambda_1} \times \mathbb{Z}/2^{\lambda_2}, K_2 = \mathbb{Z}/2^{\nu_1 + 1} \times (\mathbb{Z}/2)^2$, for $\lambda_1 \ge \lambda_2$, $\nu_1\geq0$. We may assume that $\lambda_2 \ge 1$ and $\nu_1\geq 1$, otherwise $\mathfrak{r}(G) \le 4$, and hence $G$ belongs to $\mathcal{A}_2\times \mathcal{A}_2$. 
Compute the product of Young diagrams: \[
[\lambda_1, \lambda_2] \cdot [\nu_1 + 1, 1, 1] = \sum [x_1, x_2, x_3] + \sum [x_1, x_2, x_3, 1] + \sum [x_1, x_2, x_3, 1, 1]\] 
where the sums are for some $x_1 \ge \lambda_1, x_2 \ge \lambda_2, x_3 \ge 1$. Since $\mathfrak{r}(G_{\neq 2}) \le 3$, it follows that $G = G_2 \times G_{\neq 2}$ belongs to $\mathcal{PA}'_4$. 
 
\item
$(1\bullet 3)$ Consider the $2$-part of $H$ and $K$. Put $H_2 = \mathbb{Z}/2^{\lambda_1} \times \mathbb{Z}/2^{\lambda_2}, K_2 = (\mathbb{Z}/4)^2 \times \mathbb{Z}/2$, for $\lambda_1 \ge \lambda_2$. We may assume, that $\lambda_2 \ge 1$, otherwise $\mathfrak{r}(G) \le 4$, hence $G$ belongs to $\mathcal{PA}'_4$. Compute: 
\begin{multline*}
[\lambda_1, \lambda_2] \cdot [2, 2, 1] = [\lambda_1 + 2, \lambda_2 + 2, 1] + [\lambda_1 + 2,  \lambda_2 + 1, 2] + \\ 
+ [\lambda_1 + 2,  \lambda_2 + 1, 1, 1] + [\lambda_1 + 2,  \lambda_2, 2, 1] +
 [\lambda_1 + 1,  \lambda_2 + 2, 2] +\\+ [\lambda_1 + 1,  \lambda_2 + 2, 1, 1] + [\lambda_1 + 1,  \lambda_2 + 1, 2, 1] +\\+ [\lambda_1 + 1,  \lambda_2 + 1, 1, 1, 1] + [\lambda_1 + 1,  \lambda_2, 2, 2] + [\lambda_1 + 1,  \lambda_2, 2, 1, 1] +\\+ [\lambda_1,  \lambda_2 + 2, 2, 1] + [\lambda_1,  \lambda_2 + 1, 2, 2] + [\lambda_1,  \lambda_2 + 1, 2, 1, 1] + [\lambda_1,  \lambda_2, 2, 2, 1].
\end{multline*}
 Since $\mathfrak{r}(G_{\neq 2}) \le 2$, it follows that $G = G_2 \times G_{\neq 2}$ belongs to $\mathcal{PA}'_4$.

\item
$(1\bullet 4)$ Consider the product of Young diagram for the $3$-parts of $H$ and $K$. Put $H_3 = \mathbb{Z}/3^{\lambda_1} \times \mathbb{Z}/3^{\lambda_2}, K_3 = (\mathbb{Z}/3)^3$, for $\lambda_1 \ge \lambda_2$. We may assume that $\lambda_2 \ge 1$, otherwise $\mathfrak{r}(G) \le 4$, hence $G$ belongs to $\mathcal{PA}'_4$. Compute: 
\[
[\lambda_1, \lambda_2] \cdot [1, 1, 1] = [\lambda_1 + 1, \lambda_2 + 1, 1] + [\lambda_1 + 1, \lambda_2, 1, 1] + [\lambda_1, \lambda_2 + 1, 1, 1] + [\lambda_1, \lambda_2, 1, 1, 1].
\] 
Since $\mathfrak{r}(G_{\neq 3}) \le 2$, it follows that $G = G_3 \times G_{\neq 3}$ belongs to $\mathcal{PA}'_4$. 
 
\item
$(1\bullet 5)$ Consider the product of Young diagram for the $2$-parts of $H$ and $K$. Put $H_2 = \mathbb{Z}/2^{\lambda_1} \times \mathbb{Z}/2^{\lambda_2}, K_2 = (\mathbb{Z}/2)^4$, for $\lambda_1 \ge \lambda_2$. We may assume that $\lambda_2 \ge 1$, otherwise $G$ is $\mathcal{PA}'_4$. Compute: 
\[
[\lambda_1, \lambda_2] \cdot [1, 1, 1, 1] = [\lambda_1 + 1, \lambda_2 + 1, 1, 1] + [\lambda_1 + 1, \lambda_2, 1, 1, 1] + [\lambda_1, \lambda_2 + 1, 1, 1, 1] + [\lambda_1, \lambda_2, 1, 1, 1, 1].
\] 
Since $\mathfrak{r}(G_{\neq 2}) \le 2$, it follows, that $G = G_2 \times G_{\neq 2}$ belongs to $\mathcal{PA}'_4$.
 
\item
$(2\bullet 2)$ Consider the product of Young diagram for the $2$-parts of $H$ and $K$. Put $H_2 = \mathbb{Z}/2^{\lambda_1} \times (\mathbb{Z}/2)^2, K_2 =\mathbb{Z}/2^{\nu_1} \times (\mathbb{Z}/2)^2$. 
We may assume that $\lambda_1\geq 1$ and $\nu_1\geq 1$, otherwise $\mathfrak{r}(G)\leq 4$ and $G$ belongs to $\mathcal{PA}'_4$. 
Compute: 
\begin{multline*}
[\lambda_1 + 1, 1, 1] \cdot [\nu_1 + 1, 1, 1] = \sum[x_1, x_2, 1, 1, 1, 1] +\\+ \sum[x_1, x_2, 1, 1, 1] +  \sum[x_1, x_2, 1, 1] + \sum[x_1, x_2, 2] + \\+ \sum[x_1, x_2, 2, 1, 1] + \sum[x_1, x_2, 2, 1] + \sum[x_1, x_2, 2, 2]
\end{multline*}
where the sum is for some $x_1 \ge \nu_1 + 1, x_2 \ge 1$. Since $\mathfrak{r}(G_{\neq 2}) \le 2$, it follows that $G = G_2 \times G_{\neq 2}$ belongs to $\mathcal{PA}'_4$.
 
\item
$(2\bullet 3)$ Consider the product of Young diagram for the $2$-parts of $H$ and $K$: 
\begin{multline*}
[\lambda_1 + 1, 1, 1] \cdot [2, 2, 1] = [\lambda_1 + 3, 3, 2] + [\lambda_1 + 3, 3, 1, 1] + [\lambda_1 + 3, 2, 2, 1] +\\+ [\lambda_1 + 3, 2, 1, 1, 1] + [\lambda_1 + 2, 3, 3] + [\lambda_1 + 2, 3, 2, 1] +\\+ [\lambda_1 + 2, 3, 1, 1, 1] + [\lambda_1 + 2, 2, 2, 2] + [\lambda_1 + 2, 2, 2, 1, 1] +\\+ [\lambda_1 + 2, 2, 1, 1, 1, 1] + [\lambda_1 + 1, 3, 3, 1] + [\lambda_1 + 1, 3, 2, 2] +\\+ [\lambda_1 + 1, 3, 2, 1, 1] + [\lambda_1 + 1, 2, 2, 2, 1] + [\lambda_1 + 1, 2, 2, 1, 1, 1].
\end{multline*} 
Since $\mathfrak{r}(G_{\neq 2}) \le 1$, it follows that $G = G_2 \times G_{\neq 2}$ belongs to $\mathcal{PA}'_4$. 
 
\item
$(2\bullet 4)$ Consider the product of Young diagram for the $3$-parts of $H$ and $K$: $$ [\lambda_1] \cdot [1, 1, 1] = [\lambda_1 + 1, 1, 1] + [\lambda_1, 1, 1, 1].$$ Since $\mathfrak{r}(G_{\neq 3}) \le 3$, it follows that $G = G_3 \times G_{\neq 3}$ belongs to $\mathcal{PA}'_4$. 
 
\item
$(2 \bullet 5)$ Similar to the previous case, consider the product of Young diagram for the $2$-parts of $H$ and $K$: 
\begin{multline*}
[\lambda_1 + 1, 1, 1] \cdot [1, 1, 1, 1] = [\lambda_1 + 2, 2, 2, 1] + [\lambda_1 + 2, 2, 1, 1, 1] +\\+ [\lambda_1 + 2, 1, 1, 1, 1, 1] + [\lambda_1 + 1, 2, 2, 1, 1] +\\+ [\lambda_1 + 1, 2, 1, 1, 1, 1] + [\lambda_1 + 1, 1, 1, 1, 1, 1, 1]. 
\end{multline*}
Since $\mathfrak{r}(G_{\neq 2}) \le 1$, it follows that $G = G_2 \times G_{\neq 2}$  belongs to $\mathcal{PA}'_4$.
 
\item
$(3 \bullet 3)$ Consider the product of Young diagram for the $2$-parts of $H$ and $K$: 
\begin{multline*}
[2, 2, 1] \cdot [2, 2, 1] = [4, 4, 2] + [4, 4, 1, 1] + [4, 3, 3] +[4, 3, 2, 1] +\\+ [4, 3, 1, 1, 1] + [4, 2, 2, 2] + [4, 2, 2, 1, 1] + [3, 3, 3, 1]+\\ + [3, 3, 2, 2] + [3, 3, 2, 1, 1] + [3, 3, 1, 1, 1, 1] + [3, 2, 2, 2, 1] + \\ + [3, 2, 2, 1, 1, 1] + [2, 2, 2, 2, 2] + [2, 2, 2, 2, 1, 1].
\end{multline*} 
It can be seen that $G = G_2$, and either $G = (\mathbb{Z}/4)^5$, or $G$ belongs to $\mathcal{PA}'_4$. 
 
\item
$(3 \bullet 4)$ Since $\mathfrak{r}(G_{2}) \le 3$ and $\mathfrak{r}(G_{3}) \le 3$, it follows that $G = G_2 \times G_3$ belongs to $\mathcal{PA}'_4$.
 
\item
$(3\bullet 5)$ Consider the product of Young diagram for the $2$-parts of $H$ and $K$: 
\begin{multline*}
[2, 2, 1] \cdot [1, 1, 1, 1] = [3, 3, 2, 1] + [3, 3, 1, 1, 1] + [3, 2, 2, 1, 1] +\\+ [3, 2, 1, 1, 1, 1] + [2, 2, 2, 1, 1, 1] + [2, 2, 1, 1, 1, 1, 1].
\end{multline*}
Since $G = G_2$, it follows that $G$ belongs to $\mathcal{PA}'_4$. 

\item
$(4 \bullet 4)$ Similar to the previous case, consider the product of Young diagram for $3$-part of groups: $$[1, 1, 1] \cdot [1, 1, 1] = [2, 2, 2] + [2, 2, 1, 1] + [2, 1, 1, 1, 1] + [1, 1, 1, 1, 1, 1].$$ Since $G = G_3$, it follows that $G$ belongs to $\mathcal{PA}'_4$. 
 
\item
$(4 \bullet 5)$ Since $\mathfrak{r}(G_{2}) \le 4$ and $\mathfrak{r}(G_{3}) \le 3$, it follows that $G = G_2 \times G_3$ belongs to $\mathcal{PA}'_4$. 
 
\item
$(5 \bullet 5)$ Consider the product of Young diagram for $2$-part of groups: 
\begin{multline*}
[1, 1, 1, 1] \cdot [1, 1, 1, 1] = [2, 2, 2, 2] + [2, 2, 2, 1, 1] +\\+ [2, 2, 1, 1, 1, 1] + [2, 1, 1, 1, 1, 1, 1] + [1, 1, 1, 1, 1, 1, 1, 1].
\end{multline*}
Since $G = G_2$, it follows that $G$ belongs to $\mathcal{PA}'_4$. The proof is completed.
\end{enumerate} 
\end{proof}
 
%We have not achieved equality $\mathcal{A}_1 \bullet P_3 + \mathcal{A}_2 \bullet \mathcal{A}_2 = \mathcal{A}_1 \times P_3 + \mathcal{A}_2 \times \mathcal{A}_2$ with these sets of groups. Let us increase the set $P_3$ and prove a similar theorem. 
Put 
\begin{enumerate}
\item
$G_1=(\mathbb{Z}/4)^4$, 
\item
$G_2=(\mathbb{Z}/8)^2\times(\mathbb{Z}/4) \times \mathbb{Z}/2$, 
\item
$G_3=(\mathbb{Z}/6)^2 \times (\mathbb{Z}/3)^2$,
\item
$G_4=(\mathbb{Z}/6)^3 \times \mathbb{Z}/2$. 
\end{enumerate}
Define a set 
$
\mathcal{B}'_3=\mathcal{A}_3 + \{G_1,G_2,G_3,G_4\}.
$
One has $\mathcal{B}'_3\subseteq \mathcal{B}_3$.  
Above we have defined $\mathcal{A}'_3=\mathcal{PA}_3=\mathcal{A}_1\times \mathcal{A}_2$. One has $\mathcal{A}'_3\subseteq \mathcal{A}_3$.  
Similarly to $\mathcal{PA}'_4=\mathcal{A}_1\times \mathcal{A}'_3+\mathcal{A}_2\times \mathcal{A}_2$, we define $\mathcal{PB}'_4=\mathcal{B}_1\times \mathcal{B}'_3+\mathcal{B}_2\times \mathcal{B}_2$.
 
%Put $\mathcal{PA}_4' = \mathcal{A}_1 \times P_3' + \mathcal{A}_2 \times \mathcal{A}_2$:

\begin{remark}
We have $(\mathbb{Z}/4)^5 \in \mathcal{PB}_4'$. Also $\mathcal{PA}_4'\subseteq \mathcal{PA}_4\subseteq \mathcal{PB}_4'$.
\end{remark} 
 
%At the moment, there is no complete classification $Cr_3(\mathbb{C})$, however, we know, that $P_3 \subseteq Cr_3(\mathbb{C})$, then we are forced to consider set $P_3'$.
 
\begin{thm}[=Theorem \ref{intro-second-thm}]
$\mathcal{B}_1 \bullet \mathcal{B}_3' + \mathcal{B}_2 \bullet \mathcal{B}_2 = \mathcal{B}_1 \times \mathcal{B}_3' + \mathcal{B}_2 \times \mathcal{B}_2$.
\end{thm}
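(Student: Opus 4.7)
My plan is as follows. The inclusion $\supseteq$ is immediate from $\mathcal{A}\times\mathcal{B}\subseteq\mathcal{A}\bullet\mathcal{B}$, so I focus on $\subseteq$. Write $\mathcal{PB}'_4 := \mathcal{B}_1\times\mathcal{B}_3' + \mathcal{B}_2\times\mathcal{B}_2$ and split the task into showing $\mathcal{B}_2\bullet\mathcal{B}_2\subseteq\mathcal{PB}'_4$ and $\mathcal{B}_1\bullet\mathcal{B}_3'\subseteq\mathcal{PB}'_4$. The first is handled instantly by Theorem \ref{intro-main-thm}: the unique element of $(\mathcal{A}_2\bullet\mathcal{A}_2)\setminus(\mathcal{A}_2\times\mathcal{A}_2)$ is $(\mathbb{Z}/4)^5$, and $(\mathbb{Z}/4)^5 = \mathbb{Z}/4\times G_1\in\mathcal{B}_1\times\mathcal{B}_3'$, as already noted in the Remark preceding the theorem.

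For the second inclusion, decompose $\mathcal{B}_1\bullet\mathcal{B}_3' = \mathcal{A}_1\bullet\mathcal{A}_3 \;+\; \mathcal{A}_1\bullet\{G_1, G_2, G_3, G_4\}$ and treat the two summands separately. For $\mathcal{A}_1\bullet\mathcal{A}_3$, I would use the known rank bound for finite abelian subgroups of $\mathrm{Cr}_3(\mathbb{C})$: by \cite{Pr11}, \cite{Pr14}, \cite{PS17}, \cite{Kuz20}, \cite{Xu20}, \cite{Lo22}, \cite{KZh24}, any $K\in\mathcal{A}_3$ satisfies $\mathfrak{r}(K_p)\le 3$ for every prime $p$. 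Combining this with \cite[Corollary~3.2]{Lo24}---which asserts that any abelian extension $G = \mathbb{Z}/n\bullet K$ with cyclic kernel either splits or has the form $G\cong H^+\times K^-$ with $H^+$ cyclic and $K^- = K/\phi(\mathbb{Z}/n)$ a quotient of $K$---gives the desired containment: in the split case $G\in\mathcal{A}_1\times\mathcal{A}_3\subseteq\mathcal{B}_1\times\mathcal{B}_3'$, and otherwise $\mathfrak{r}(K^-_p)\le\mathfrak{r}(K_p)\le 3$ for all $p$, so $K^-\in\mathcal{PA}_3\subseteq\mathcal{A}_3\subseteq\mathcal{B}_3'$ by Remark \ref{rem-rank-groups}, yielding $G\in\mathcal{B}_1\times\mathcal{B}_3'$.

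For the remaining summand $\mathcal{A}_1\bullet\{G_i\}$ ($i = 1, 2, 3, 4$), I would carry out a prime-by-prime Young-diagram computation directly mirroring the proof of Theorem \ref{intro-main-thm}. For each $G_i$, each prime $p\in\{2,3\}$ at which $G_i$ is nontrivial, and each cyclic $(\mathbb{Z}/n)_p$ of type $[\lambda_1]$, compute the Littlewood--Richardson product $[\lambda_1]\cdot\nu((G_i)_p)$ via Proposition \ref{prop-fulton} and check case by case that every resulting type $\mu$ yields a group decomposable as a product in $\mathcal{B}_1\times\mathcal{B}_3'$ or in $\mathcal{B}_2\times\mathcal{B}_2$.

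The hardest cases I anticipate are $G_1 = (\mathbb{Z}/4)^4$, $G_2$, and $G_4$, whose $2$-parts already have rank $4$ so that extensions produce groups of $2$-rank up to $5$---outside the reach of Remark \ref{rem-rank-groups}. These cases require explicit factorizations: e.g. $\mathbb{Z}/2\bullet G_1$ yields either $(\mathbb{Z}/4)^4\times\mathbb{Z}/2 = G_1\times\mathbb{Z}/2\in\mathcal{B}_3'\times\mathcal{B}_1$ or $\mathbb{Z}/8\times(\mathbb{Z}/4)^3 = (\mathbb{Z}/8\times\mathbb{Z}/4)\times(\mathbb{Z}/4)^2\in\mathcal{B}_2\times\mathcal{B}_2$. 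The presence of the exceptional groups $G_1, \ldots, G_4$ in $\mathcal{B}_3'$ is precisely what lets these high-rank outputs land in the RHS; all lower-rank outputs fall into $\mathcal{PA}_4\subseteq\mathcal{PB}'_4$ by Remark \ref{rem-rank-groups}. The main obstacle is therefore to verify that for every $G_i$ and every admissible $\lambda_1$ the list of types produced by the Littlewood--Richardson rule can be exhausted by such explicit decompositions; the calculation is routine but tedious, closely parallel to the enumeration in Theorem \ref{intro-main-thm}.
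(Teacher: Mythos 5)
Your overall architecture matches the paper's: $\mathcal{B}_2\bullet\mathcal{B}_2$ is disposed of by Theorem \ref{intro-main-thm} together with $(\mathbb{Z}/4)^5=\mathbb{Z}/4\times(\mathbb{Z}/4)^4\in\mathcal{B}_1\times\mathcal{B}'_3$, and the rest reduces to a case-by-case Littlewood--Richardson computation for $\mathcal{B}_1\bullet\mathcal{B}'_3$. But there are two genuine gaps. The first: you only ever treat cyclic kernels, whereas $\mathcal{B}_1=\mathcal{A}_1$ also contains $(\mathbb{Z}/2)^2$. The splitting statement \cite[Corollary 3.2]{Lo24} you invoke applies only to cyclic $H$, and in the summand $\mathcal{A}_1\bullet\{G_1,\dots,G_4\}$ you only propose computing products $[\lambda_1]\cdot\nu$ of a \emph{one-row} diagram with the type of $(G_i)_p$. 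The paper's proof consists of exactly $2\times 4=8$ cases, and the four cases $(\mathbb{Z}/2)^2\bullet G_i$, i.e.\ the products $[1,1]\cdot\nu((G_i)_p)$, are simply absent from your plan. They are not vacuous: for instance $(\mathbb{Z}/2)^2\bullet(\mathbb{Z}/4)^4$ produces $(\mathbb{Z}/8)^2\times(\mathbb{Z}/4)^2$, $\mathbb{Z}/8\times(\mathbb{Z}/4)^3\times\mathbb{Z}/2$ and $(\mathbb{Z}/4)^4\times(\mathbb{Z}/2)^2$, each needing its own explicit factorization into $\mathcal{B}_2\times\mathcal{B}_2$ or $\mathcal{B}_1\times\mathcal{B}'_3$.

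The second gap is the rank bound you use for $\mathcal{A}_1\bullet\mathcal{A}_3$: it is false that every $K\in\mathcal{A}_3$ satisfies $\mathfrak{r}(K_p)\le 3$ for all $p$. Already $(\mathbb{Z}/2)^6$ (acting on $(\mathbb{P}^1)^3$) and $\mathbb{Z}/3\times(\mathbb{Z}/3)^3$ lie in $\mathcal{A}'_3\subseteq\mathcal{A}_3$ --- see the paper's Table 3; the cited rank bounds for $p$-subgroups of $\mathrm{Cr}_3(\mathbb{C})$ give $3$ only for large $p$, not for $p=2,3$. Hence the step ``$\mathfrak{r}(K^-_p)\le\mathfrak{r}(K_p)\le 3$, so $K^-\in\mathcal{PA}_3$ by Remark \ref{rem-rank-groups}'' does not go through. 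The conclusion $K^-\in\mathcal{A}_3$ is salvageable without any rank bound, since a quotient of a finite abelian group is isomorphic to a subgroup of it, so $K^-$ embeds into $\mathrm{Cr}_3(\mathbb{C})$ whenever $K$ does; alternatively, the paper avoids this summand altogether by using Proposition \ref{prop-lemma}(2), $\mathcal{A}_1\bullet\mathcal{A}'_3\subseteq\mathcal{A}_2\bullet\mathcal{A}_2$, and then quoting Theorem \ref{intro-main-thm} (its case list implicitly takes $\mathcal{B}'_3$ to be $\mathcal{A}'_3+\{G_1,\dots,G_4\}$ as in Table 4). Your route via the cyclic-kernel splitting lemma is a legitimate alternative for that piece once the rank argument is replaced by the quotient-is-a-subgroup observation, but the missing $(\mathbb{Z}/2)^2$ cases must still be computed.
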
 
 
\begin{proof} To prove this theorem, it is enough to  consider $2\times 4 = 8$ more cases from $\mathcal{B}_1 \bullet \mathcal{B}_3'$ that we did not have when proving Theorem \ref{thm-2}. Consider an exact sequence $$0 \rightarrow H \rightarrow G \rightarrow K \rightarrow 0.$$ We will consider various possibilities for groups $H$ and $K$. We may assume that $H \in \mathcal{A}_1=\mathcal{B}_1$, $K \in \mathcal{B}_3'$. In the notation $(n \bullet k)$ the number $n$ means the number of $H$ in $\mathcal{B}_1$, and $k$ is the number $k$ from $\mathcal{B}'_3$. 

\begin{enumerate}
\item
$(1 \bullet 8)$ Consider the $2$-parts of $H$ and $K$. Put $H_2 = \mathbb{Z}/2^{\lambda_1}, K_2 = (\mathbb{Z}/4)^4$. We may assume that $\lambda_1 \ge 1$, otherwise $G$ belongs to $\mathcal{PB}_4'$. The product of Young diagram: $$[\lambda_1] \cdot [2, 2, 2, 2] = [\lambda_1 + 2, 2, 2, 2] + [\lambda_1 + 1, 2, 2, 2, 1] + [\lambda_1, 2, 2, 2, 2].$$
It follows that $G = G_2 \times G_{\neq 2}$ belongs to $\mathcal{PB}_4'$.
 
\item
$(1 \bullet 9)$ Consider the $2$-parts of $H$ and $K$. Put $H_2 = \mathbb{Z}/2^{\lambda_1}, K_2 = (\mathbb{Z}/8)^2\times(\mathbb{Z}/4) \times \mathbb{Z}/2$. We may assume that $\lambda_1 \ge 1$, otherwise $G$ belongs to $\mathcal{PB}_4'$. The product of Young diagram: 
\begin{multline*}
[\lambda_1] \cdot [3, 3, 2, 1] = [\lambda_1 + 3, 3, 2, 1] + [\lambda_1 + 2, 3, 3, 1] + [\lambda_1 + 2, 3, 2, 2] + [\lambda_1 + 2, 3, 2, 1, 1] +\\+ [\lambda_1 + 1, 3, 3, 2] + [\lambda_1 + 1, 3, 3, 1, 1] + [\lambda_1 + 1, 3, 2, 2, 1] + [\lambda_1, 3, 3, 2, 1].
\end{multline*}
It follows that $G = G_2 \times G_{\neq 2}$ belongs to $\mathcal{PB}_4'$.

\item
$(1 \bullet 10)$ Consider the $3$-parts of $H$ and $K$. Put $H_3 = \mathbb{Z}/3^{\lambda_1}, K_3 = (\mathbb{Z}/3)^4$. We may assume that $\lambda_1 \ge 1$, otherwise $G$ belongs to $\mathcal{PB}_4'$. The product of Young diagram: $$[\lambda_1] \cdot [1, 1, 1, 1] = [\lambda_1 + 1, 1, 1, 1] + [\lambda_1, 1, 1, 1, 1].$$ Since $\mathfrak{r}(G_{\neq 3}) \le 3$, it follows that $G = G_2 \times G_{\neq 2}$ belongs to $\mathcal{PB}_4'$. 

\item
$(1 \bullet 11)$ Consider the $2$-parts of $H$ and $K$. Put $H_2 = \mathbb{Z}/2^{\lambda_1}, K_3 = (\mathbb{Z}/2)^4$. We may assume that $\lambda_1 \ge 1$, otherwise $G$ belongs to $\mathcal{PB}_4'$. The product of Young diagram: $$[\lambda_1] \cdot [1, 1, 1, 1] = [\lambda_1 + 1, 1, 1, 1] + [\lambda_1, 1, 1, 1, 1].$$ Since $\mathfrak{r}(G_{\neq 2}) \le 4$, it follows that $G = G_2 \times G_{\neq 2}$ belongs to $\mathcal{PB}_4'$.
 
\item
$(2 \bullet 8)$ Consider the $2$-parts of $H$ and $K$. The product of Young diagram is: $$[1, 1] \cdot [2, 2, 2, 2] = [3, 3, 2, 2] + [3, 2, 2, 2, 1] + [2, 2, 2, 2, 1, 1].$$ Since $G = G_2$, it follows that $G$ belongs to $\mathcal{PB}_4'$.
 
\item
$(2 \bullet 9)$ Consider the $2$-parts of $H$ and $K$. The product of Young diagram is: 
\begin{multline*}
[1, 1] \cdot [3, 3, 2, 1] = [4, 4, 2, 1] + [4, 3, 3, 1] + [4, 3, 2, 2] + [4, 3, 2, 1, 1] + \\ + [3, 3, 3, 2] + [3, 3, 3, 1, 1] + [3, 3, 2, 2, 1] + [3, 3, 2, 1, 1, 1].
\end{multline*} 
Since $G = G_2$, it follows that $G$ belongs to $\mathcal{PB}_4'$.
 
\item
$(2 \bullet 10)$ Consider the $2$-parts of $H$ and $K$.  The product of Young diagram is: $$[1, 1] \cdot [1, 1] = [2, 2] + [2, 1, 1] + [1, 1, 1, 1].$$ Since $G = G_2 \times G_3$ and $\mathfrak{r}(G) \le 4$, it follows that $G$ belongs to $\mathcal{PB}_4'$.
 
\item
$(2 \bullet 11)$ Consider the $2$-parts of $H$ and $K$.  The product of Young diagram is: $$[1, 1] \cdot [1, 1, 1, 1] = [2, 2, 1, 1] + [2, 1, 1, 1, 1] + [1, 1, 1, 1, 1, 1].$$ Since $G = G_2 \times G_3$, it follows that $G$ belongs to $\mathcal{PB}_4'$. The proof is completed.
\end{enumerate}
\end{proof}
 
\section*{Appendix. Sets of finite abelian groups}
 
We list the sets of finite abelian groups that are discussed in this work.  

\ 

\begin{center}
\label{table-1}
\begin{tabular}{ | m{1.3em} | m{5.0cm} | m{4.5cm} | } 
  \hline
  (1) & $\mathbb{Z}/k$ & $k \ge 1$ \\ 
  \hline
  (2) & $(\mathbb{Z}/2)^2$ & \\ 
  \hline
  \end{tabular}

\ 

\emph{Table 1. The set of groups $\mathcal{A}_1 = \mathcal{B}_1$.}

\

\label{table-2}
\begin{tabular}{ | m{1.3em} | m{5.0cm} | m{4.5cm} | } 
  \hline
  (1) & $\mathbb{Z}/k\times \mathbb{Z}/l$ & $k \geq 1, l \ge 1$ \\ 
    \hline
  (2) & $\mathbb{Z}/2k\times (\mathbb{Z}/2)^2$ & $k\geq 1$ \\ 
    \hline
  (3) & $(\mathbb{Z}/4)^2\times \mathbb{Z}/2$ &  \\ 
    \hline
  (4) & $(\mathbb{Z}/3)^3$ & \\ 
    \hline
  (5) & $(\mathbb{Z}/2)^4$ &  \\ 
  \hline
\end{tabular}

\

\emph{Table 2. The set of groups $\mathcal{A}_2 = \mathcal{B}_2$.}
\end{center}

\

 \begin{center}
\label{table-3}
\begin{tabular}{ | m{1.3em} | m{5.0cm} | m{4.5cm} | } 
  \hline
  (1) & $\mathbb{Z}/k\times \mathbb{Z}/l\times
\mathbb{Z}/m$ & $k\geq 1,\ l\geq 1,\ m\geq 1$ \\ 
  \hline
  (2) & $\mathbb{Z}/2k\times(\mathbb{Z}/4)^2\times
\mathbb{Z}/2$ & $k\geq 1$ \\ 
    \hline
  (3) & $\mathbb{Z}/3k\times(\mathbb{Z}/3)^3$ & $k\geq 1$ \\ 
    \hline
  (4) & $\mathbb{Z}/2k\times \mathbb{Z}/2l\times
(\mathbb{Z}/2)^2$ & $k\geq 1,\ l\geq 1$ \\ 
    \hline
  (5) & $\mathbb{Z}/2k\times (\mathbb{Z}/2)^4$ & $k\geq 1$ \\ 
    \hline
  (6) & $(\mathbb{Z}/4)^2\times (\mathbb{Z}/2)^3$ & \\ 
    \hline
  (7) & $(\mathbb{Z}/2)^6$ &  \\ 
    \hline
\end{tabular}

\

\emph{Table 3. The set of groups $\mathcal{A}'_3=\mathcal{PA}_3=\mathcal{A}_1\times \mathcal{A}_2$.}
\end{center}

\

 \begin{center}
\label{table-4}
\begin{tabular}{ | m{1.3em} | m{5.0cm} | m{4.5cm} | } 
  \hline
  (1) & $\mathbb{Z}/k\times \mathbb{Z}/l\times
\mathbb{Z}/m$ & $k\geq 1,\ l\geq 1,\ m\geq 1$ \\ 
  \hline
  (2) & $\mathbb{Z}/2k\times(\mathbb{Z}/4)^2\times
\mathbb{Z}/2$ & $k\geq 1$ \\ 
    \hline
  (3) & $\mathbb{Z}/3k\times(\mathbb{Z}/3)^3$ & $k\geq 1$ \\ 
    \hline
  (4) & $\mathbb{Z}/2k\times \mathbb{Z}/2l\times
(\mathbb{Z}/2)^2$ & $k\geq 1,\ l\geq 1$ \\ 
    \hline
  (5) & $\mathbb{Z}/2k\times (\mathbb{Z}/2)^4$ & $k\geq 1$ \\ 
    \hline
  (6) & $(\mathbb{Z}/4)^2\times (\mathbb{Z}/2)^3$ & \\ 
    \hline
  (7) & $(\mathbb{Z}/2)^6$ &  \\ 
    \hline
  (8) & $(\mathbb{Z}/4)^4$ &  \\ 
    \hline
  (9) & $ (\mathbb{Z}/8)^2\times(\mathbb{Z}/4) \times \mathbb{Z}/2$ &  \\ 
    \hline
  (10) & $(\mathbb{Z}/6)^2 \times (\mathbb{Z}/3)^2$ &  \\ 
    \hline
  (11) & $(\mathbb{Z}/6)^3 \times \mathbb{Z}/2$ &  \\ 
    \hline
\end{tabular}

\

\emph{Table 4. The set of groups $\mathcal{B}'_3.$}
\end{center}

\

\begin{center}
\label{table-5}
\begin{tabular}{ | m{1.3em} | m{5.0cm} | m{5cm} | } 
  \hline
  (1) & $\mathbb{Z}/n\times \mathbb{Z}/k \times \mathbb{Z}/l\times \mathbb{Z}/m$ & $n\geq 1, k\geq 1,\ l\geq 1,\ m\geq 1$ \\ 
  \hline
  (2) & $\mathbb{Z}/n\times \mathbb{Z}/k \times \mathbb{Z}/2l\times (\mathbb{Z}/2)^2$ & $n\geq 1, k\geq 1,\ l\geq 1$ \\ 
    \hline
  (3) & $\mathbb{Z}/n\times \mathbb{Z}/k \times (\mathbb{Z}/4)^2\times \mathbb{Z}/2$ & $n\geq 1, k\geq 1$ \\ 
    \hline
  (4) & $\mathbb{Z}/n\times \mathbb{Z}/k \times (\mathbb{Z}/3)^3$ & $n\geq 1, k\geq 1$ \\ 
    \hline
  (5) & $\mathbb{Z}/n\times \mathbb{Z}/k \times (\mathbb{Z}/2)^4$ & $n\geq 1, k\geq 1$ \\ 
    \hline
  (6) & $\mathbb{Z}/2n \times (\mathbb{Z}/4)^2 \times (\mathbb{Z}/2)^3$ & $n\geq 1$ \\ 
    \hline
  (7) & $\mathbb{Z}/2n\times (\mathbb{Z}/2)^6$ & $n\geq 1$ \\ 
    \hline
  (8) & $(\mathbb{Z}/4)^4\times (\mathbb{Z}/2)^2$ & \\ 
    \hline
  (9) & $(\mathbb{Z}/4)^2\times (\mathbb{Z}/2)^5$ &\\ 
    \hline
  (10) & $(\mathbb{Z}/3)^6$ & \\ 
    \hline
  (11) & $(\mathbb{Z}/2)^8$ & \\ 
    \hline
\end{tabular}

\

\emph{Table 5. The set of groups $\mathcal{PA}'_4=\mathcal{A}_1\times \mathcal{A}'_3+\mathcal{A}_2\times \mathcal{A}_2$.}
\end{center}

\

 \begin{center}
\label{table-6}
\begin{tabular}{ | m{1.3em} | m{5.0cm} | m{5cm} | } 
  \hline
  (1) & $\mathbb{Z}/n\times \mathbb{Z}/k \times \mathbb{Z}/l\times \mathbb{Z}/m$ & $n\geq 1, k\geq 1,\ l\geq 1,\ m\geq 1$ \\ 
  \hline
  (2) & $\mathbb{Z}/n\times \mathbb{Z}/k \times \mathbb{Z}/2l\times (\mathbb{Z}/2)^2$ & $n\geq 1, k\geq 1,\ l\geq 1$ \\ 
    \hline
  (3) & $\mathbb{Z}/n\times \mathbb{Z}/k \times (\mathbb{Z}/4)^2\times \mathbb{Z}/2$ & $n\geq 1, k\geq 1$ \\ 
    \hline
  (4) & $\mathbb{Z}/n\times \mathbb{Z}/k \times (\mathbb{Z}/3)^3$ & $n\geq 1, k\geq 1$ \\ 
    \hline
  (5) & $\mathbb{Z}/n\times \mathbb{Z}/k \times (\mathbb{Z}/2)^4$ & $n\geq 1, k\geq 1$ \\ 
    \hline
  (6) & $\mathbb{Z}/2n \times (\mathbb{Z}/4)^2 \times (\mathbb{Z}/2)^3$ & $n\geq 1$ \\ 
    \hline
  (7) & $\mathbb{Z}/2n\times (\mathbb{Z}/2)^6$ & $n\geq 1$ \\ 
    \hline
  (8) & $(\mathbb{Z}/4)^4\times (\mathbb{Z}/2)^2$ & \\ 
    \hline
  (9) & $(\mathbb{Z}/4)^2\times (\mathbb{Z}/2)^5$ &\\ 
    \hline
  (10) & $(\mathbb{Z}/3)^6$ & \\ 
    \hline
  (11) & $(\mathbb{Z}/2)^8$ & \\ 
    \hline
  (12) & $\mathbb{Z}/n \times (\mathbb{Z}/4)^4$ & $n\geq 1$ \\ 
    \hline
  (13) & $\mathbb{Z}/n \times (\mathbb{Z}/8)^2 \times \mathbb{Z}/4 \times \mathbb{Z}/2$ & $n\geq 1$ \\ 
    \hline
  (14) & $\mathbb{Z}/n \times (\mathbb{Z}/6)^2 \times (\mathbb{Z}/3)^2$ & $n\geq 1$ \\ 
    \hline
  (15) & $\mathbb{Z}/n \times (\mathbb{Z}/6)^3 \times \mathbb{Z}/2$ & $n\geq 1$ \\ 
    \hline
  (16) & $(\mathbb{Z}/8)^2 \times \mathbb{Z}/4 \times (\mathbb{Z}/2)^3$ & \\ 
    \hline
  (17) & $(\mathbb{Z}/6)^3 \times (\mathbb{Z}/2)^3$ & \\ 
    \hline
\end{tabular}

\

\emph{Table 6. The set of groups $\mathcal{PB}'_4=\mathcal{B}_1\times \mathcal{B}'_3+\mathcal{B}_2\times \mathcal{B}_2$.}
\end{center}

 \Addresses
\end{document}